\newtheorem{definition}{Definition}
\newtheorem{proposition}{Proposition}
\newtheorem{theorem}{Theorem}
\newtheorem{corollary}{Corollary}
\newtheorem{lemma}{Lemma}
\title{Deformation of the $\sigma_2$-curvature}
\author{Almir Silva Santos\footnote{almir@mat.ufs.br} \;and Maria Andrade\footnote{maria@mat.ufs.br} \\
\\	
	\vspace{6pt} Universidade Federal de Sergipe, Departamento de Matem\'atica \\S\~ao Crist\'ov\~ao-SE, 49100-000, Brasil }
\date{}
\begin{document}

\maketitle

\begin{abstract}
Our main goal in this work is to deal with results concern to the \linebreak $\sigma_2$-curvature. First we find a symmetric 2-tensor canonically associated to the $\sigma_2$-curvature and we present an Almost Schur Type Lemma.  Using this tensor we introduce the notion of $\sigma_2$-singular space and under a certain hypothesis we prove a rigidity result. Also we deal with the relations between flat metrics and $\sigma_2$-curvature. With a suitable condition on the $\sigma_2$-curvature we show that a metric has to be flat if it is close to a flat metric. We conclude this paper by proving that the 3-dimensional torus does not admit a metric with constant scalar curvature and non-negative $\sigma_2$-curvature unless it is flat.
\end{abstract}

\section{Introduction}
\label{intro}
In 1975, Fischer and Marsden \cite{MR0380907} studied deformations of the scalar \linebreak curvature in a  smooth manifold $M$. They proved several results concern the scalar curvature map $\mathcal R:\mathcal M\rightarrow C^\infty(M)$ which associates to each metric $g\in\mathcal M$ its scalar curvature $R_g$. Here $\mathcal M$ is the Riemannian metric space and $C^\infty(M)$ is the  smooth functions space in $M$. Using the linearization  of the map $\mathcal R$ at a given metric $g$ and its $L^2$-formal adjoint, they were able to prove local surjectivity of $\mathcal R$ under certain hypothesis. Another interesting result concerns flat metrics. They proved that if $g$ has nonnegative scalar curvature and it is close to a flat metric, then $g$ is also flat. Recently, Lin and Yuan \cite{MR3529121} extended many of the results contained in  \cite{MR0380907} to the $Q$-curvature context.

Our goal in this work is to study deformations of the $\sigma_2$-curvature and prove analogous results to those in \cite{MR0380907} and \cite{MR3529121} in this setting. 

Let $(M,g)$ be an $n$-dimensional Riemannian manifold. The $\sigma_2$-curvature, which we will denote by $\sigma_2(g)$, is defined as the second elementary symmetric function of the eigenvalue of the tensor 
$A_g=Ric_g-\frac{R_g}{2(n-1)}g,$ where $Ric_g$ and $R_g$ are the Ricci and scalar curvature of the metric $g$, respectively. A simple calculation gives
\begin{equation}\label{eq002}
\sigma_2(g)=-\frac{1}{2}|Ric_g|^2+\frac{n}{8(n-1)}R_g^2.
\end{equation}

See \cite{arXiv150700053} and the reference contained therein, for motivation to study the  $\sigma_2$-curvature and its generalizations, the $\sigma_k$-curvature. We notice that the definition (\ref{eq002}) and the definition in \cite{arXiv150700053} are the same, up to a constant which depends only on the dimension of the manifold.

In \cite{MR3529121} the authors observed  that if $\gamma_g$ is the linearization of the map $\mathcal R$ and $\gamma_g^*$ is its $L^2$-formal adjoint then $\gamma_g^*(1)=-Ric_g$ and then $R_g=-tr_g\gamma^*_g(1).$

In fact, it is well known that 
$$\gamma_gh=-\Delta_gtr_gh+\delta^2_gh-\langle Ric_g,h\rangle$$
and
$$\gamma_g^*(f)=\nabla^2_gf-(\Delta_gf)g-fRic_g,$$
where $\delta_g=-div_g$ and $h\in S_2(M)$. Here $S_2(M)$ is the space of symmetric $2$-tensor on $M$. 

%In their study of the surjectivity of the scalar curvature map $\mathcal R$, Fischer and Marsden considered the so-called {\it vacuum static equation} $\gamma^*_g(f)=0$. The complete Riemannian manifold for which this equation has a nontrivial solution is called  {\it vacuum static space}. In the last few decades, numerous investigations have been made on the study of these spaces. The classification problem is a fundamental question as well as rigidity result. See \cite{arXiv:1503.03803}, \cite{CGY} and the references contained therein.

It is natural to ask whether for each kind of curvature there is a 2-tensor canonically associated to it and if so which information about the manifold we can find through this tensor. In \cite{MR3529121} and \cite{arXiv:1602.01212v2}  the authors addressed this problem in the $Q$-curvature context and have found some interesting results. 

Motivated by the works \cite{MR0380907}, \cite{MR3529121} and \cite{arXiv:1602.01212v2} we consider the $\sigma_2$-curvature as a nonlinear map $\sigma_2:\mathcal M\rightarrow C^\infty(M)$. Let $\Lambda_g:S_2(M)\rightarrow C^\infty(M)$ be the linearization of the $\sigma_2$-curvature at the metric $g$ and $\Lambda_g^*:C^\infty(M)\rightarrow S_2(M)$ be its $L^2$-formal adjoint. After some calculations we obtain the explicit expression of $\Lambda_g$ and $\Lambda_g^*$, see Propositions \ref{prop001} and \ref{lemma001}. We will show that the relation between $\Lambda_g^*(1)$ and the $\sigma_2$-curvature is similar to the relation between the Ricci and scalar curvature. As a first application we derive a Schur Type Lemma for the $\sigma_2$-curvature. Moreover, we derive an almost-Schur lemma similar to that for the scalar curvature and the $Q$-curvature, see \cite{MR2875643} and \cite{arXiv:1602.01212v2}.

We notice here that the $Q$-curvature is defined as 
\begin{equation}\label{eq030}
Q_g=-\frac{1}{n-2}\Delta_g\sigma_1(g)+\frac{4}{(n-2)^2}\sigma_2(g)+\frac{n-4}{2(n-2)^2}\sigma_1(g)^2,
\end{equation}
where $\sigma_1(g)=tr_gA_g$. Although the $\sigma_2$-curvature appears in the $Q$-curvature definition, it is not immediate that the $Q$-curvature and $\sigma_2$-curvature share properties. In fact this is not true. In conformal geometry, for instance, while the $Q$-curvature satisfies a similar equation to the Yamabe equation, the conformal change of the $\sigma_2$-curvature is more complicated, see \cite{MR3420504}, \cite{MR888880} and \cite{arXiv150700053}.

In \cite{CGY}, Chang, Gursky and Yang have defined a $Q$-singular space as a complete Riemannian manifold which has the $L^2$-formal adjoint of the linearization of the $Q$-curvature map with nontrivial kernel. This motivated us to define the following notion of $\sigma_2$-singular space.

\begin{definition}\label{def001}
	A complete Riemannian manifold $(M,g)$ is $\sigma_2$-singular if 
	$$ ker \Lambda_g^*\not=\{0\},$$
	where $\Lambda_g^*: C^{\infty}(M)\to S_2(M)$ is the $L^2$-formal adjoint of $\Lambda_g$. We will call the triple $(M,g,f)$ as a $\sigma_2$-singular space  if $f$ is a nontrivial function in $ker \Lambda_g^*.$
\end{definition}

In Section \ref{sec04} we will prove our first result using this notion.
\begin{theorem}\label{teo001}
	Let $(M^n,g,f)$ be a $\sigma_2$-singular space.
	\begin{enumerate}[(a)]
		\item If $Ric_g-\frac{1}{2}R_gg$ has a sign in the tensorial sense, then the $\sigma_2$-curvature is constant.
		\item If $f$ is a nonzero constant function, then the $\sigma_2$-curvature is identically zero.
	\end{enumerate}
\end{theorem}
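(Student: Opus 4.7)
The plan rests on two ingredients: the explicit expression for the formal adjoint $\Lambda_g^*$ (Proposition \ref{lemma001}), and a contracted Bianchi-type identity for $\sigma_2$ obtained from the diffeomorphism invariance of the $\sigma_2$-curvature. The identity is derived by differentiating $\sigma_2(\phi_t^*g)=\phi_t^*\sigma_2(g)$ at $t=0$, where $\phi_t$ is the flow of a vector field $X$, pairing with $f$, and integrating by parts; since $\mathcal{L}_Xg=2\delta_g^*X^\flat$, this yields
\begin{equation*}
2\delta_g\Lambda_g^*(f)=f\,d\sigma_2(g),
\end{equation*}
in complete analogy with $2\delta_g\gamma_g^*(f)=f\,dR_g$ in the Fischer--Marsden setting.

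For part (b), one sets $f\equiv c\neq 0$. All derivative contributions in the formula for $\Lambda_g^*(f)$ vanish, and the hypothesis $\Lambda_g^*(c)=0$ reduces to a purely algebraic tensorial identity $c\,T_g=0$, where $T_g$ is the canonical symmetric $2$-tensor associated to $\sigma_2$ that the paper emphasizes. Taking the $g$-trace recovers, up to a nonzero dimensional constant, $c\,\sigma_2(g)$ (this is exactly the analogue of $\mathrm{tr}_g\gamma_g^*(1)=-R_g$). Hence $\sigma_2(g)\equiv 0$.

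For part (a), apply the Bianchi identity above to $f\in\ker\Lambda_g^*$ to obtain $f\,d\sigma_2(g)\equiv 0$. Consequently $d\sigma_2(g)$ vanishes on the open set $\{f\neq 0\}$, and to conclude that $\sigma_2(g)$ is constant on the connected manifold $M$ it suffices to show that $\{f\neq 0\}$ is dense. This is exactly where the hypothesis on $E_g:=Ric_g-\frac{1}{2}R_g g$ enters: inspection of the explicit formula for $\Lambda_g^*(f)$ shows that its zeroth-order coefficient is a multiple of $E_g$, and when $E_g$ has a definite sign the equation $\Lambda_g^*(f)=0$ admits a maximum-principle / unique-continuation style argument which forbids $f$ from vanishing on any nonempty open set.

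The main obstacle is precisely this last step. Since $\Lambda_g^*$ is a curvature-weighted second-order tensorial operator and is not manifestly elliptic at a general metric, standard unique continuation does not apply out of the box. The role of the sign condition on $E_g$ is to turn the quadratic form associated with the operator into something definite, allowing a comparison or integral inequality tailored to the explicit expression for $\Lambda_g^*$ to rule out interior zeros of $f$; getting this argument to work cleanly from the concrete formula is the delicate point of the proof.
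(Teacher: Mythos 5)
Your outline for part~(b) matches the paper: from the diffeomorphism-invariance identity $\delta\Lambda_g^*(f)=\tfrac12 f\,d\sigma_2(g)$ together with the trace identity $tr_g\Lambda_g^*(1)=-2\sigma_2(g)$ (equations (\ref{eq003})--(\ref{eq004}) in the paper), a constant $f\not=0$ in $\ker\Lambda_g^*$ immediately forces $\sigma_2(g)\equiv 0$.

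For part~(a) there is a genuine gap, and you have in fact misidentified where the tensor $E_g:=Ric_g-\tfrac12 R_g\,g$ enters. It is not the zeroth-order coefficient of $\Lambda_g^*$; rather, the key move --- which your plan is missing --- is to take the $g$-trace of the kernel equation. From (\ref{eq003}), $tr_g\Lambda_g^*(f)=0$ becomes the \emph{scalar} second-order equation
\[
\frac{n-2}{2}\bigl\langle \nabla^2 f,\; Ric_g-\tfrac12 R_g g\bigr\rangle-2\sigma_2(g)\,f=0,
\]
whose principal symbol is $\tfrac{n-2}{2}E_g^{ij}\xi_i\xi_j$. The sign hypothesis on $E_g$ is precisely what makes this a (possibly variable-coefficient but) non-degenerate elliptic operator, so the classical strong unique continuation results for scalar elliptic equations (Aronszajn, Cordes --- the paper's references \cite{MR0076155},\cite{MR0086237}) apply directly; there is no need to worry, as you do, about ellipticity of the full tensorial operator $\Lambda_g^*$. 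Concretely, once $f\,d\sigma_2(g)\equiv 0$: if $d\sigma_2(g)$ were nonzero at some $x_0$ it would be nonzero on a neighborhood, forcing $f\equiv 0$ there, and unique continuation for the traced elliptic equation then forces $f\equiv 0$ on all of $M$, contradicting nontriviality. You correctly reduce to showing $\{f\neq0\}$ is dense, and you correctly intuit that a unique-continuation argument is needed, but you leave this step explicitly unresolved (``the delicate point of the proof''), whereas passing to the trace makes it routine. That trace step is the missing idea.
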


In the same direction of the vacuum static spaces and the $Q$-singular spaces we expected to prove that the $\sigma_2$-singular spaces give rise to a small set in the space of all Riemannian manifold. For this purpose we prove that there is no $\sigma_2$-singular Einstein manifold with negative scalar curvature.

\begin{theorem} 
	Let $(M^n, g)$ be a closed Einstein manifold with negative scalar curvature. Then
	$$ker\Lambda^*_g=\{0\}$$
	that is, $(M^n, g)$  cannot be a $\sigma_2$-singular space.
\end{theorem}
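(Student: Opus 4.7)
The plan is to show that on an Einstein manifold the operator $\Lambda_g^*$ reduces to a nonzero scalar multiple of the Fischer--Marsden operator $\gamma_g^*$, after which the classical argument for $\gamma_g^*$ on negatively curved Einstein manifolds finishes the proof.

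First I would compute $\Lambda_g(h)$ at an Einstein metric by varying each term in $\sigma_2(g) = -\tfrac12|Ric_g|^2 + \tfrac{n}{8(n-1)}R_g^2$. The variation of the second piece is immediate: $D(R_g^2)(h) = 2R_g\,\gamma_g(h)$. For the first piece, variation of the inverse metric gives $D|Ric_g|^2(h) = -2\langle h, Ric_g\circ Ric_g\rangle + 2\langle Ric_g, DRic_g(h)\rangle$. On an Einstein metric, $Ric_g = \tfrac{R_g}{n}g$ and $R_g$ is constant, so $Ric_g\circ Ric_g = \tfrac{R_g^2}{n^2}g$ and $\langle Ric_g, DRic_g(h)\rangle = \tfrac{R_g}{n}\mathrm{tr}_g(DRic_g(h))$. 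Using the contracted second Bianchi identity in the form $\mathrm{tr}_g(DRic_g(h)) = \gamma_g(h) + \langle h, Ric_g\rangle$, the $\mathrm{tr}_g h$ contributions cancel and I obtain
\begin{equation*}
\Lambda_g(h) = \frac{(n-2)^2 R_g}{4n(n-1)}\, \gamma_g(h).
\end{equation*}

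Taking $L^2$-adjoints, $\Lambda_g^*(f) = \dfrac{(n-2)^2 R_g}{4n(n-1)}\,\gamma_g^*(f)$. Since $n\ge 3$ and $R_g<0$, the scalar factor is nonzero, so $f\in\ker\Lambda_g^*$ is equivalent to
\begin{equation*}
\gamma_g^*(f) = \nabla^2_g f - (\Delta_g f) g - f\, Ric_g = 0.
\end{equation*}

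Finally I would run the standard trace argument. Taking $\mathrm{tr}_g$ of the last identity gives $(1-n)\Delta_g f - R_g f = 0$, i.e.\ $\Delta_g f = -\tfrac{R_g}{n-1}f$. Multiplying by $f$ and integrating over the closed manifold,
\begin{equation*}
-\int_M |\nabla_g f|^2\, dv_g = -\frac{R_g}{n-1}\int_M f^2\, dv_g.
\end{equation*}
Because $R_g<0$, the right-hand side is nonnegative while the left-hand side is nonpositive, forcing $f\equiv 0$. Thus $\ker\Lambda_g^*=\{0\}$.

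The main obstacle is really the linearization computation in the first step: one must be careful with the two sources of variation in $|Ric_g|^2$ (the metric used to contract indices and the Ricci tensor itself) and then apply the trace identity for $DRic_g$ so that everything collapses to a multiple of $\gamma_g(h)$. Once this reduction is in hand, steps two and three are routine, and the hypothesis $R_g<0$ is used only at the very end to exclude the sphere-type eigenfunctions that appear in the positive scalar curvature case.
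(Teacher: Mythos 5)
Your proof is correct, and it takes a genuinely more direct route than the paper's. The paper first derives the explicit expression of $\Lambda_g^*$ on Einstein manifolds (Lemma 2), then proves a separate Lemma (Lemma 3) showing that a closed $\sigma_2$-singular Einstein manifold with $\sigma_2(g)>0$ has $R_g>0$ (this step invokes Theorem 1 to rule out $f$ being constant so that $\int|\nabla f|^2>0$), and finally obtains the statement by contradiction, since the Einstein identity $\sigma_2(g)=\tfrac{(n-2)^2}{8n(n-1)}R_g^2$ makes $\sigma_2(g)>0$ automatic when $R_g\neq 0$. You instead isolate the structural identity $\Lambda_g(h)=\tfrac{(n-2)^2R_g}{4n(n-1)}\gamma_g(h)$ at an Einstein metric, hence $\Lambda_g^*=\tfrac{(n-2)^2R_g}{4n(n-1)}\gamma_g^*$; this clean reduction to the Fischer--Marsden operator is implicit but never stated in the paper, which records only the expanded formula for $\Lambda_g^*(f)$. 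After that, the trace of $\gamma_g^*(f)=0$ gives the same eigenvalue equation $\Delta_g f=-\tfrac{R_g}{n-1}f$ that the paper derives, but you exploit the sign of $R_g$ to force $\int_M f^2\,dv_g=0$ outright, so you never need the non-constancy argument or the detour through positivity of $\sigma_2$. I verified that your linearization formula is consistent with the paper's conventions (in particular $\operatorname{tr}_g\mathring{R}(h)=\langle Ric_g,h\rangle$, which is forced by compatibility of (3) and (4) in the paper), so the cancellation of the $\operatorname{tr}_g h$ terms works exactly as you claim. Both approaches are sound; yours buys a shorter, self-contained argument and the clarifying observation that $\Lambda_g^*$ is proportional to $\gamma_g^*$ in the Einstein case.
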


On the other hand, if a $\sigma_2$-singular Einsten manifold has positive\linebreak $\sigma_2$-curvature, then  we obtain the following rigidity result.

\begin{theorem} \label{isometricsphere} 
	Let $(M^n,g,f)$ be a closed $\sigma_2$-singular Einstein manifold with \linebreak positive $\sigma_2$-curvature. Then $(M^n,g)$ is isometric to the round sphere with radius $r=\left(\frac{n(n-1)}{R_g}\right)^{\frac{1}{2}}$ and $f$ is an eigenfunction of the Laplacian associated to the first eigenvalue $\frac{R_g}{n-1}$ on $\mathbb S^n(r)$.
\end{theorem}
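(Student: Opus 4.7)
The idea is to observe that on an Einstein background the linearization $\Lambda_g$ collapses to a constant multiple of the scalar-curvature linearization $\gamma_g$, so that the condition $\Lambda_g^*(f)=0$ becomes the classical vacuum-static equation $\gamma_g^*(f)=0$. From there one runs the standard trace/trace-free argument and closes with Obata's rigidity theorem.

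The first step is to specialize the explicit formula for $\Lambda_g^*$ from Proposition \ref{lemma001} to the Einstein setting, where $Ric_g=\frac{R_g}{n}g$, $R_g$ is a constant, all covariant derivatives of $Ric_g$ and $R_g$ vanish, and $|Ric_g|^2=R_g^2/n$. A short computation based on $\sigma_2(g)=-\frac{1}{2}|Ric_g|^2+\frac{n}{8(n-1)}R_g^2$ shows that on Einstein metrics
\[
\Lambda_g h \;=\; \frac{(n-2)^2\,R_g}{4n(n-1)}\,\gamma_g h, \qquad \text{hence} \qquad \Lambda_g^*(f)\;=\;\frac{(n-2)^2\,R_g}{4n(n-1)}\,\gamma_g^*(f).
\]
Because on Einstein backgrounds $\sigma_2(g)=\frac{(n-2)^2}{8n(n-1)}R_g^2$, the hypothesis $\sigma_2(g)>0$ forces $n\ge 3$ and $R_g\ne 0$, so the scalar prefactor is nonzero and $\Lambda_g^*(f)=0$ is equivalent to $\gamma_g^*(f)=0$.

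The second step treats $\gamma_g^*(f)=\nabla^2 f-(\Delta f)g-f\,Ric_g=0$. Taking its $g$-trace gives $(1-n)\Delta f=R_g f$, hence $\Delta f=-\frac{R_g}{n-1}f$, while its trace-free part reduces to $\nabla^2 f=\frac{\Delta f}{n}g$ since the remaining terms $-(\Delta f)g$ and $-\frac{R_g}{n}fg$ are pure-trace. Combining both identities yields the Obata equation
\[
\nabla^2 f\;=\;-\frac{R_g}{n(n-1)}\,f\,g.
\]
A nonzero constant $f$ is ruled out either by part (b) of Theorem \ref{teo001} (a constant $f\in\ker\Lambda_g^*$ would force $\sigma_2(g)\equiv 0$) or directly from $\gamma_g^*(f)=0$, which would require $R_g=0$. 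Integrating $f\Delta f=-\frac{R_g}{n-1}f^2$ over $M$ and using that $f$ is non-constant then forces $R_g>0$.

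Finally, Obata's rigidity theorem applied to the non-constant solution $f$ of $\nabla^2 f=-cfg$ with $c=\frac{R_g}{n(n-1)}>0$ gives that $(M^n,g)$ is isometric to $\mathbb{S}^n(r)$ with $r=1/\sqrt{c}=\sqrt{n(n-1)/R_g}$. The identity $\Delta f=-\frac{R_g}{n-1}f$ then identifies $f$ as a Laplace eigenfunction for the first nonzero eigenvalue $n/r^2=\frac{R_g}{n-1}$ on $\mathbb{S}^n(r)$, as claimed. The main obstacle lies in the first step: one must verify directly from the formula of Proposition \ref{lemma001} that every term beyond the multiple of $\gamma_g^*$ drops out on Einstein backgrounds. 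Once this one-time calculation is in hand, the remainder of the argument is a straightforward Obata-rigidity adaptation of the vacuum-static and $Q$-singular stories.
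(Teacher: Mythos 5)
Your proof is correct and leads to the same conclusion, but the route through the final rigidity step is worth contrasting with the paper's. Your key structural observation---that on an Einstein background the Einstein condition $Ric_g=\tfrac{R_g}{n}g$ collapses $\Lambda_g^*$ to the scalar multiple $\tfrac{(n-2)^2R_g}{4n(n-1)}\gamma_g^*$, so that $\sigma_2$-singularity becomes the classical vacuum-static equation---is exactly the content of the paper's Lemma~\ref{lemma002}, just phrased more suggestively. The divergence is in how you close: you extract \emph{both} the trace and the trace-free part of $\gamma_g^*(f)=0$ to obtain the Obata Hessian equation $\nabla^2 f=-\tfrac{R_g}{n(n-1)}fg$ directly, and then invoke Obata's Hessian rigidity theorem. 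The paper, by contrast, uses only the trace $\Delta_g f+\tfrac{R_g}{n-1}f=0$ to exhibit $\tfrac{R_g}{n-1}$ as a nonzero Laplace eigenvalue, then invokes the Lichnerowicz--Obata eigenvalue estimate $\lambda_1\ge\tfrac{R_g}{n-1}$ (which requires the positive Ricci bound available from the Einstein condition) to force equality and hence the round sphere. Your version is marginally more self-contained since it does not need the Lichnerowicz lower bound and uses the full strength of the Hessian equation, which Lemma~\ref{lemma002} already supplies as equation~\eqref{eq006}; the paper's version makes the eigenspace identification (and hence the claim $\dim\ker\Lambda_g^*=n+1$, which the paper includes in its statement of Theorem~\ref{teo005}) slightly more immediate. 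Both routes are standard and both rest on the same Obata-type rigidity.
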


If the manifold is not $\sigma_2$-singular we can find a locally prescribed  \linebreak $\sigma_2$-curvature problem.

\begin{theorem}\label{teo006}
	Let $(M^n,g_0)$ be a closed Riemannian manifold not $\sigma_2$-singular and satisfying  one of the conditions
	\begin{enumerate}[(i)]
		\item\label{cond001} It is Einstein with positive $\sigma_2$-curvature; or
		\item\label{cond002} %The $\sigma_2$-curvature satisfy the inequality 
		$\sigma_2(g)>\displaystyle \frac{1}{2(n-1)}|Ric_g|^2.$
	\end{enumerate}
	Then, there is a neighborhood $U\subset C^{\infty}(M)$ of $\sigma_{2}({g_0})$ such that for any $\psi \in U,$ there is a metric $g$ on $M$ close to $g_0$ with $\sigma_2(g)=\psi.$
\end{theorem}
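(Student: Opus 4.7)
The plan is to apply the implicit function theorem in H\"older spaces to the nonlinear map
$$\Phi(u,\psi):=\sigma_2\bigl(g_0+\Lambda_{g_0}^*(u)\bigr)-\psi,\qquad \Phi:C^{k+4,\alpha}(M)\times C^{k,\alpha}(M)\longrightarrow C^{k,\alpha}(M),$$
in a neighborhood of $(0,\sigma_2(g_0))$, with $u$ restricted so that $g_0+\Lambda_{g_0}^*(u)$ is still a Riemannian metric. The chain rule and the defining property of the linearization give $D_u\Phi(0,\sigma_2(g_0))=\Lambda_{g_0}\circ\Lambda_{g_0}^*$. Once this composition is shown to be a topological isomorphism of the stated H\"older spaces, the implicit function theorem will produce a map $\psi\mapsto u(\psi)$ defined on a neighborhood $U\subset C^{k,\alpha}(M)$ of $\sigma_2(g_0)$ with $\sigma_2(g_0+\Lambda_{g_0}^*(u(\psi)))=\psi$; an elliptic bootstrap then upgrades $u(\psi)$, and hence $g(\psi):=g_0+\Lambda_{g_0}^*(u(\psi))$, to $C^\infty$ whenever $\psi$ is smooth.

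Injectivity of $\Lambda_{g_0}\circ\Lambda_{g_0}^*$ is immediate from the identity
$$\int_M u\cdot\bigl(\Lambda_{g_0}\circ\Lambda_{g_0}^*\bigr)u\,dv_{g_0}=\int_M|\Lambda_{g_0}^*u|^2\,dv_{g_0},$$
together with the hypothesis that $(M,g_0)$ is not $\sigma_2$-singular: any $u$ in the kernel must satisfy $\Lambda_{g_0}^*u=0$, and so $u=0$. For surjectivity I would show that $\Lambda_{g_0}\circ\Lambda_{g_0}^*$ is a formally self-adjoint elliptic fourth-order operator; then the Fredholm alternative identifies its cokernel with its kernel, and triviality of the latter forces surjectivity onto $C^{k,\alpha}(M)$.

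The technical heart is the ellipticity check. Using the explicit formula for $\Lambda_{g_0}^*$ supplied by Proposition \ref{lemma001}, the principal symbol $\sigma_\xi(\Lambda_{g_0}^*)(u)$ at $\xi\in T_x^*M\setminus\{0\}$ is a symmetric $2$-tensor whose coefficients are linear in $Ric_{g_0}$ and $R_{g_0}$ and quadratic in $\xi$; contracting with $\sigma_\xi(\Lambda_{g_0})$ gives a scalar quartic polynomial in $\xi$ that must be shown not to vanish. Under hypothesis (i), the Einstein relation $Ric_{g_0}=\tfrac{R_{g_0}}{n}g_0$ collapses the symbol, and a direct computation reduces it to a nonzero constant multiple of $R_{g_0}^2|\xi|^4$, with $R_{g_0}\neq 0$ forced by $\sigma_2(g_0)>0$. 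Under hypothesis (ii), the pointwise inequality $\sigma_2(g_0)>\tfrac{1}{2(n-1)}|Ric_{g_0}|^2$, equivalent to $R_{g_0}^2>4|Ric_{g_0}|^2$, is precisely the positivity condition that keeps the quartic bounded away from zero in every direction $\xi$ at every point of $M$.

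The principal obstacle is this pointwise algebraic verification in case (ii): one must extract from the fourth-order symbol of $\Lambda_{g_0}\circ\Lambda_{g_0}^*$, whose coefficients mix $Ric_{g_0}$, $R_{g_0}$ and $\xi$ in a non-trivial quadratic way, the fact that $\sigma_2(g_0)>\tfrac{1}{2(n-1)}|Ric_{g_0}|^2$ is the correct sufficient condition for nondegeneracy uniformly in $\xi$ and in $x\in M$. Beyond this, the self-adjoint Fredholm theory, the implicit function theorem in H\"older spaces, and the $C^\infty$ bootstrap are standard.
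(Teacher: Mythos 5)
Your outline is correct and, at bottom, does the same thing the paper does: everything hinges on showing that the second-order operator $\Lambda_{g_0}^*$ has injective principal symbol (equivalently, that $\Lambda_{g_0}\circ\Lambda_{g_0}^*$ is elliptic), and then combining this with the non-$\sigma_2$-singular hypothesis to conclude surjectivity of $\Lambda_{g_0}$ and invoke an implicit function theorem. The organizational difference is that you run the Berger--Ebin decomposition ``by hand'' by restricting to deformations of the form $g_0+\Lambda_{g_0}^*(u)$ and studying the self-adjoint elliptic operator $\Lambda_{g_0}\circ\Lambda_{g_0}^*$ on H\"older spaces, whereas the paper cites the Splitting Theorem of Berger--Ebin to get $C^\infty(M)=\mathrm{Im}\,\Lambda_{g_0}\oplus\ker\Lambda_{g_0}^*$ directly and then applies the Generalized Implicit Function Theorem from Fischer--Marsden. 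These are two standard packagings of the identical underlying elliptic argument; yours is a bit more explicit and self-contained, theirs is shorter because it leans on black boxes. Your injectivity identity $\int u\cdot\Lambda_{g_0}\Lambda_{g_0}^*u=\int|\Lambda_{g_0}^*u|^2$ is exactly what makes the Fredholm alternative close the loop.

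The one place where you left a real gap --- and flagged it honestly --- is the pointwise verification of ellipticity in case (ii). The paper handles this more cleverly than a brute-force computation of $|\sigma_\xi(\Lambda_{g_0}^*)(1)|^2$: it takes the \emph{trace} of the symbol, which collapses to
$$tr\bigl(\sigma_{\xi}(\Lambda^*_{g_0})\bigr)=\frac{n-2}{2}\Bigl(-\tfrac{1}{2}|\xi|^2 R_{g_0}+\langle\xi\otimes\xi,Ric_{g_0}\rangle\Bigr),$$
and then observes via Cauchy--Schwarz that $\langle\xi\otimes\xi,Ric_{g_0}\rangle\le|\xi|^2|Ric_{g_0}|$, so that under $R_{g_0}^2>4|Ric_{g_0}|^2$ the trace can vanish only if $\xi=0$. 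Since a nonzero trace certainly forces the tensor $\sigma_\xi(\Lambda^*_{g_0})(1)$ to be nonzero, this yields injectivity of the symbol (and hence your ellipticity) without touching the full quartic. In the Einstein case the same trace is a nonzero multiple of $R_{g_0}|\xi|^2$, matching your ``collapses to $R_{g_0}^2|\xi|^4$'' once squared. You should incorporate this trace trick; otherwise the ``principal obstacle'' you identified remains open in your write-up.
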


While the condition (\ref{cond001}) in the theorem can be satisfied by manifolds of any dimension, we notice here that by the Cauchy-Schwarz inequality we have $R_g^2\leq n|Ric_g|^2$. This implies that if a manifold satisfies the condition (\ref{cond002}), which is equivalent to $R_g^2>4|Ric_g|^2$, then its dimension has to be greater than 4. It should be an interesting problem to study what happens in dimension 3 and 4. Also we do not know if the condition (\ref{cond002}) is needed or not, that is, if we can prove the result only with $\sigma_2(g)\geq 0$.

%The next result shows that $\sigma_2$-singular space and vacuum static space are Einstein manifold. In particular, it has to be either Ricci flat or isometric to a round sphere.

%\begin{theorem} Suppose that $(M,g,f)$ is a $\sigma_2$-singular space and also a vacuum static space. Then $(M,g)$ is an Einstein manifold. Moreover, it has to be either Ricci flat or isometric to a round sphere.
%\end{theorem}

Finally in Section \ref{sec05} we study the relation between flat metrics and the $\sigma_2$-curvature which the first result reads as follows.

\begin{theorem} \label{teo003}
	Let $(M^n,g_0)$ be a closed flat Riemannian manifold with \linebreak dimension $n\geq 3$. Let $g$ be a metric on $M$ with
	$$\int_M\sigma_2(g)dv_g>\frac{1}{8n(n-1)}\int_MR_g^2dv_g.$$ If $||g-g_0||_{C^2(M,g_0)}$ is sufficiently small, then $g$ is also flat.
\end{theorem}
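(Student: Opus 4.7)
The plan is to recast the hypothesis as positivity of a diffeomorphism-invariant functional $\mathcal F(g)$ for which $g_0$ is a critical point with non-positive second variation, and then combine a slice-theorem gauge fixing with the resulting coercivity of $\mathcal F''$ transverse to flat deformations to force $\mathcal F(g)\le 0$, contradicting the hypothesis unless $g$ itself is flat.

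Using $\sigma_2(g) = -\tfrac{1}{2}|Ric_g|^2 + \tfrac{n}{8(n-1)}R_g^2$, the hypothesis is equivalent to
$$\mathcal F(g):=\int_M\Big(-\tfrac{1}{2}|Ric_g|^2+\tfrac{n+1}{8n}R_g^2\Big)\,dv_g>0.$$
Since $g_0$ is flat, $\mathcal F(g_0)=0$ and $g_0$ is a critical point of $\mathcal F$. Writing $g=g_0+h$ and Taylor-expanding in $h$, the leading (quadratic) contribution is
$$Q_{g_0}(h,h)=\int_M\Big(-\tfrac{1}{2}|DRic_{g_0}h|^2+\tfrac{n+1}{8n}(\gamma_{g_0}h)^2\Big)\,dv_{g_0},$$
where $DRic_{g_0}$ is the linearization of the Ricci tensor at $g_0$, whose trace equals $\gamma_{g_0}h$. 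The central algebraic step is to show $Q_{g_0}\le 0$, which I would verify via the Berger--Ebin decomposition. Lie-derivative pieces contribute zero. For the conformal direction $h=2\phi g_0$ one computes $DRic_{g_0}h=-(n-2)\nabla^2\phi-\Delta\phi\,g_0$ and $\gamma_{g_0}h=-2(n-1)\Delta\phi$, and applying the flat Bochner identity $\int(\Delta\phi)^2\,dv_{g_0}=\int|\nabla^2\phi|^2\,dv_{g_0}$ yields $Q_{g_0}(h,h)=-\tfrac{n-1}{2n}\int(\Delta\phi)^2\,dv_{g_0}\le 0$, with equality iff $\phi$ is constant. For a transverse-traceless tensor $h_0$ one has $DRic_{g_0}h_0=-\tfrac12\Delta h_0$ and $\gamma_{g_0}h_0=0$, so $Q_{g_0}(h_0,h_0)=-\tfrac18\int|\Delta h_0|^2\,dv_{g_0}\le 0$, with equality iff $h_0$ is parallel. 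Cross terms vanish after integration by parts on the flat background. Hence $Q_{g_0}\le 0$, and its null space is the sum of infinitesimal isometries and infinitesimal flat deformations (parallel symmetric 2-tensors).

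To convert non-positivity into strict coercivity I would invoke Ebin's slice theorem: any $g$ sufficiently $C^2$-close to $g_0$ admits a decomposition $g=\varphi^*(g_1+h')$ with $\varphi\in\mathrm{Diff}(M)$ near the identity, $g_1$ a flat metric $C^2$-close to $g_0$, and $h'$ lying in a slice $L^2$-orthogonal both to the $\mathrm{Diff}(M)$-orbit through $g_1$ and to the tangent space at $g_1$ of the space of flat metrics. Repeating the quadratic-form analysis at $g_1$ identifies the null space of $Q_{g_1}$ with precisely those gauged-away directions, so on the slice one has the coercive bound $Q_{g_1}(h',h')\le -c\|h'\|_{H^2}^2$ for some $c>0$. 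The Taylor remainder is controlled by $C\|h'\|_{C^1}\|h'\|_{H^2}^2$, and so for $\|h'\|_{C^2}$ small enough it is absorbed, yielding $\mathcal F(g)=\mathcal F(g_1+h')\le -\tfrac{c}{2}\|h'\|_{H^2}^2\le 0$ by the diffeomorphism invariance of $\mathcal F$. Combined with the strict hypothesis $\mathcal F(g)>0$, this forces $h'=0$, so $g$ is isometric to $g_1$ and hence flat.

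The main obstacle I anticipate is the coercivity step: identifying the full null space of $Q_{g_1}$ and matching it direction by direction with tangent vectors to the space of flat metrics on $M$, so that the slice theorem really gauges those directions out and the Hessian becomes strictly negative on the transverse slice. A secondary technical point is the norm mismatch between the $C^2$-smallness in the hypothesis and the $H^2$-type coercivity produced by $Q_{g_1}$, which I would reconcile through standard Sobolev interpolation on the compact $(M,g_0)$.
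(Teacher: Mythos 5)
Your overall strategy is genuinely different from the paper's. Both proofs start by gauge-fixing to $\operatorname{div}_{g_0}h=0$ (the paper's Theorem~\ref{teo002}, which plays the same role as the slice theorem you invoke), and both compute the second variation of a $\sigma_2$-type functional at the flat background. But the paper does \emph{not} try to slice transverse to the moduli space of flat metrics. Instead it perturbs the functional: it works with $\mathcal F_\varepsilon(g)=\int_M\sigma_2(g)\,dv_{g_0}-\bigl(\tfrac{1}{8n(n-1)}+\varepsilon\bigr)\int_M R_g^2\,dv_{g_0}$ for a small $\varepsilon>0$, whose second variation on divergence-free $h$ is $-\int_M\bigl(2\varepsilon(\Delta\,tr\,h)^2+\tfrac14|\Delta\mathring h|^2\bigr)\,dv_{g_0}$. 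The extra $\varepsilon(\Delta\,tr\,h)^2$ term is exactly what gives a lower bound in terms of $\int|\Delta h|^2$ (equivalently $\int|\nabla^2h|^2$ on a flat background), and then the cubic Taylor remainder $|E_3|\leq C_0\int_M|h||\nabla^2h|^2$ is absorbed once $\|h\|_{C^0}<\varepsilon_n/(2C_0)$, forcing $\nabla^2h=0$, then $\nabla h=0$, then flatness of $g_0+h$ via the Christoffel-symbol computation. No orthogonality to parallel tensors and no slice relative to the flat moduli is ever needed. Your approach, by contrast, keeps $\varepsilon=0$ and trades the $\varepsilon$-trick for the harder geometric input of a double slice (gauge \emph{and} transversal to flat deformations) plus a coercivity estimate on that slice; this is the standard ``stability of critical metrics'' template, and it would be more machinery than the problem requires.

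Your second-variation algebra checks out: on a flat background one indeed gets $Q_{g_0}(2\phi g_0,2\phi g_0)=-\tfrac{n-1}{2n}\int(\Delta\phi)^2$ and $Q_{g_0}(h_0,h_0)=-\tfrac18\int|\Delta h_0|^2$ for transverse-traceless $h_0$, and the Lie-derivative directions and cross terms drop out on a closed flat manifold. But there are two concrete gaps. First, the ``Berger--Ebin'' splitting you use (conformal $\oplus$ TT $\oplus$ Lie-derivative) is not a direct sum: $L_Xg_0$ generically has both a trace part and a TT-overlapping traceless part, so the decomposition $h=2\phi g_0+h_0+L_Xg_0$ is not canonical, and you would need to specify it carefully (the paper sidesteps this entirely by working only with the invariant quantity $|\Delta\mathring h|^2$ on $\ker\delta$). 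Second, and more seriously, the step you flag yourself -- producing a flat $g_1$ and an $h'$ in a slice simultaneously $L^2$-orthogonal to the $\operatorname{Diff}$-orbit \emph{and} to the tangent of the flat moduli, with $Q_{g_1}(h',h')\le -c\|h'\|_{H^2}^2$ there -- is not supplied and is the crux of your argument. It can plausibly be done (project the gauged $h$ onto the parallel tensors, set $g_1:=g_0+(\text{parallel part})$, note $g_1$ is flat, and argue the coercivity at $g_0$ perturbs to $g_1$), but as written it is a genuine missing piece, whereas the paper's $\varepsilon$-perturbation closes the argument without invoking any structure of the flat moduli space at all.
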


Our last result is concerned with metrics in the 3-torus with constant scalar curvature and nonnegative $\sigma_2$-curvature. This result can be viewed as an extension to $\sigma_2$-curvature context of the result by Schoen-Yau \cite{MR541332}.
\begin{theorem} \label{teo008}
	The $3$-dimensional torus does not admit a metric with constant scalar curvature and nonnegative $\sigma_2$-curvature unless it is flat.
\end{theorem}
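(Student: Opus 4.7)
The plan is to decompose the problem by the sign of the constant $R_0 := R_g$ and reduce to the Schoen-Yau theorem that $T^3$ admits no metric of positive scalar curvature \cite{MR541332}. Two cases are easy. For $R_0 > 0$, Schoen-Yau directly excludes $g$. For $R_0 = 0$, formula \eqref{eq002} reduces to $\sigma_2(g) = -\frac{1}{2}|Ric_g|^2 \leq 0$ in dimension three, so $\sigma_2(g)\geq 0$ forces $Ric_g\equiv 0$, and Ricci-flat implies flat in dimension three. The substantial case is $R_0 < 0$, which the plan rules out.

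The main tool will be a Bochner-type integral identity for the trace-free Ricci tensor $\mathring{Ric} := Ric_g - \frac{R_0}{3}g$. Starting from $\frac{1}{2}\Delta|Ric_g|^2 = |\nabla Ric_g|^2 + \langle Ric_g, \Delta Ric_g\rangle$, substituting $\Delta R_{ij} = 2R_{ikjl}R^{kl} - 2R_{ik}R^k_j$ (valid when $R$ is constant), and using the three-dimensional expression for the Riemann tensor in terms of Ricci and $R$, the contraction $\langle Ric_g, \Delta Ric_g\rangle$ reduces to a polynomial in $|Ric_g|^2$, $\mathrm{tr}(Ric_g^3)$, and $R_0$. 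Integrating over $T^3$, eliminating $\mathrm{tr}(Ric_g^3)$ via Newton's identity $\mathrm{tr}(Ric_g^3) = \frac{3R_0}{2}|Ric_g|^2 - \frac{R_0^3}{2} + 3\det(Ric_g)$, and passing to the trace-free part should produce, after the $R_0^3$ contributions cancel,
\begin{equation*}
\int_{T^3}|\nabla \mathring{Ric}|^2 \, dv_g = R_0\int_{T^3}|\mathring{Ric}|^2\, dv_g + 18\int_{T^3}\det(\mathring{Ric})\, dv_g.
\end{equation*}

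Two pointwise estimates then close the argument. First, $\sigma_2(g)\geq 0$ combined with $|Ric_g|^2 = |\mathring{Ric}|^2 + R_0^2/3$ gives $|\mathring{Ric}|^2 \leq R_0^2/24$. Second, a Lagrange-multiplier calculation on the simplex $\{\mu_1+\mu_2+\mu_3 = 0\}$ yields the sharp bound $|\det(\mathring{Ric})| \leq |\mathring{Ric}|^3/(3\sqrt{6})$, attained when two eigenvalues coincide. Multiplying these gives the pointwise estimate $18|\det(\mathring{Ric})| \leq \frac{|R_0|}{2}|\mathring{Ric}|^2$, and when $R_0 < 0$ the integral identity becomes
\begin{equation*}
0 \leq \int_{T^3}|\nabla \mathring{Ric}|^2\, dv_g \leq \frac{R_0}{2}\int_{T^3}|\mathring{Ric}|^2\, dv_g \leq 0,
\end{equation*}
forcing $\mathring{Ric}\equiv 0$. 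Then $g$ is Einstein with constant negative sectional curvature $R_0/6$; but $\pi_1(T^3) = \mathbb{Z}^3$ is not the fundamental group of any closed hyperbolic $3$-manifold, contradiction.

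The main obstacle is the Bochner identity. All the ingredients are classical, but organizing the algebra so that the cubic pieces collapse into the single term $18\int\det(\mathring{Ric})$ and the $R_0^3\,\mathrm{vol}(T^3)$ contributions cancel exactly requires careful bookkeeping with Newton's identity. A useful sanity check is the product $S^1\times S^2$, where $\mathring{Ric}$ is parallel and nonzero but both sides of the identity are easily seen to vanish.
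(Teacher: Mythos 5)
Your reduction to the case $R_g=R_0<0$ via Schoen--Yau, and the handling of $R_0\ge 0$, match the paper's first step exactly, and they are correct. The divergence comes in the negative case, where the paper uses a purely pointwise algebraic argument and you propose an integral identity. The two approaches are genuinely different, but yours has a gap that I do not think can be repaired as stated.

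The claimed Bochner identity
\begin{equation*}
\int_{T^3}|\nabla\mathring{Ric}|^2\,dv_g \;=\; R_0\int_{T^3}|\mathring{Ric}|^2\,dv_g \;+\; 18\int_{T^3}\det(\mathring{Ric})\,dv_g
\end{equation*}
cannot hold in this form. Here is a structural reason. Consider, instead of $T^3$, any closed hyperbolic $3$-manifold $(M,g_0)$ (so $R_0<0$ and $\mathring{Ric}\equiv 0$), and perturb $g_0$ while keeping the scalar curvature constant. The linearization $\gamma_{g_0}(h)=-\Delta\,\mathrm{tr}(h)+\delta^2 h-\langle Ric_{g_0},h\rangle$ has surjective $L^2$-adjoint kernel $\{0\}$ at an Einstein metric of negative curvature (taking the trace of $\gamma_{g_0}^*(f)=0$ forces $\Delta f=nf$, impossible on a closed manifold unless $f\equiv 0$), so by the Berger--Ebin splitting there is an infinite-dimensional family of metrics near $g_0$ with $R\equiv R_0$. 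Generic members of this family have $\mathring{Ric}\neq 0$ of size $\varepsilon$, $\nabla\mathring{Ric}$ of size $\varepsilon$, and $\det(\mathring{Ric})=O(\varepsilon^3)$. Your identity would then read $O(\varepsilon^2)=R_0\,O(\varepsilon^2)+O(\varepsilon^3)$ with a \emph{strictly positive} left-hand side and a \emph{strictly negative} leading term on the right, a contradiction for small $\varepsilon$. The identity would in effect assert that every constant-scalar-curvature metric near a closed hyperbolic $3$-manifold is Einstein, which is false.

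What goes wrong in the derivation: when you write $\tfrac{1}{2}\Delta|Ric|^2=|\nabla Ric|^2+\langle Ric,\Delta Ric\rangle$, integrate, and try to reduce $\langle Ric,\Delta Ric\rangle$ to a pointwise cubic curvature polynomial, you are implicitly asserting that $\Delta Ric$ itself is an algebraic curvature expression. It is not. Commuting derivatives and using the contracted second Bianchi identity (for $R$ constant, $\mathrm{div}\,Ric=0$) produces, in dimension $3$, the Cotton tensor $C_{ijk}=\nabla_k R_{ij}-\nabla_j R_{ik}$. The true identity has the schematic form
\begin{equation*}
\int|\nabla\mathring{Ric}|^2\,dv_g \;=\; \tfrac{1}{2}\int|C|^2\,dv_g \;+\; \text{cubic curvature terms},
\end{equation*}
and the $\int|C|^2$ term is what makes the $\varepsilon^2$-balance above consistent. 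Your sanity checks ($S^1\times S^2$, and any Einstein metric) all have parallel Ricci, hence $C\equiv 0$, so they cannot detect this missing term. Once the $|C|^2$-term is present, $\int|\nabla\mathring{Ric}|^2-\tfrac{1}{2}\int|C|^2$ has no definite sign, and the chain of inequalities you wrote ($0\le \ldots\le\tfrac{R_0}{2}\int|\mathring{Ric}|^2\le 0$) does not close. Your pointwise bounds $|\mathring{Ric}|^2\le R_0^2/24$ and $|\det\mathring{Ric}|\le|\mathring{Ric}|^3/(3\sqrt6)$ are both correct and sharp, but they only control the cubic part of the right-hand side.

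By contrast, the paper's argument for the $R_0<0$ case is purely pointwise and avoids any integral identity. Normalizing $R_g=-1$, the bound $\sigma_2(g)\ge 0$ gives $|Ric_g|^2\le 3/8$; diagonalizing $Ric_g$ with eigenvalues $\lambda_1,\lambda_2,\lambda_3$ one finds $\lambda_i+\lambda_j\le -1/2$ for $i\ne j$; and since the Weyl tensor vanishes in dimension $3$, the sectional curvatures are $K(e_i,e_j)=\lambda_i+\lambda_j+\tfrac{1}{2}\le 0$. The Ballmann--Brin--Spatzier corollary cited in the paper then forces the metric to be flat, contradicting $R_g=-1$. This is considerably more elementary than a Bochner argument and is the route you should compare against.
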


The organization of this paper is as follows. In Section \ref{sec01} we do the\linebreak calculations to find the linearization of the $\sigma_2$-curvature map and its \linebreak$L^2$-formal adjoint. Then we find a symmetric 2-tensor canonically\linebreak associated to the $\sigma_2$-curvature and we find an almost-Schur lemma in this context. In Section \ref{sec04} we introduce the notion of $\sigma_2$-singular space and we prove some results. For instance, we prove that under certain hypothesis a $\sigma_2$-singular space has constant $\sigma_2$-curvature. We prove that there is no $\sigma_2$-singular Einstein manifold with negative scalar curvature. Another interesting result in this section is that the only $\sigma_2$-singular Einstein manifold with positive $\sigma_2$-curvature is the round sphere. Moreover we study a local prescribed $\sigma_2$-curvature problem for non $\sigma_2$-singular manifolds. Finally in Section \ref{sec05} we study the relations between flat metrics and $\sigma_2$-curvature. Also we prove a non existence result in the 3-torus.

\section{A symmetric 2-tensor canonically associated to the $\sigma_2$-curvature}\label{sec01}

In this section, we will find a symmetric 2-tensor canonically associated to the $\sigma_2$-curvature which plays an analogous role  to that of the Ricci curvature. Then using the result in \cite{MR3019161} we find directly an almost-Schur type lemma analogous to that in \cite{MR2875643}. Some of the calculations in this section can be found in \cite{MR3412398} and \cite{MR3318510} (see also \cite{K}). For the sake of the reader we do some of the calculations.

Let $(M,g)$ be an $n$-dimensional closed Riemannian manifold with $n\geq 3$. Consider a one-parameter family of metrics $\{g(t)\}$ on $\mathcal M$ with $t\in(-\varepsilon,\varepsilon)$ for some $\varepsilon>0$ and $g(0)=g$. Define $h:=\left.\frac{\partial}{\partial t}\right|_{t=0}g(t)$. It is well-known that the evolution equations of the Ricci and the scalar curvature are
\begin{equation}\label{eq00-1}
\frac{\partial}{\partial t}Ric_g = -\frac{1}{2}\left(\Delta_L h+\nabla^2tr_g  h+2\delta^*\delta h\right)
\end{equation}
and
\begin{equation}\label{eq000}
\frac{\partial}{\partial t}R_g = -\Delta_g tr_gh+\delta^2h-\langle Ric,h\rangle,
\end{equation}
respectively, where 
\begin{equation}\label{eq028}
\Delta_L h:=\Delta h+2\mathring{R}(h)-
Ric\circ h-h\circ Ric
\end{equation}
is the Lichnerowicz Laplacian acting on symmetric 2-tensor. See \cite{MR2274812}, for \linebreak instance. Here $\delta h=-div_g h$, $\delta^*$ is the $L^2$-formal adjoint of $\delta$, $\mathring{R}(h)_{ij}=g^{kl}g^{st}R_{kijs}h_{lt}$ and $(A\circ B)_{ij}=g^{kl}A_{ki}B_{lj}$ for any $A,B\in S_2(M)$.

First we have the following lemma which the proof is a direct calculation.
\begin{lemma}\label{lema001}
	$$
	\displaystyle\frac{\partial}{\partial t}\left|Ric_{g(t)}\right|^2 = 2\left\langle Ric_g,\frac{\partial}{\partial t}Ric_g\right\rangle-2\langle Ric_g\circ Ric_g,h\rangle
	$$
	and
	$$
	\begin{array}{rcl}
	\displaystyle\frac{\partial^2}{\partial t^2}\left|Ric_{g(t)}\right|^2 & = &\displaystyle 4\langle h\circ h,Ric\circ Ric\rangle+2|Ric\circ h|^2
	-8\left\langle Ric\circ h,\frac{\partial}{\partial t}Ric\right\rangle\\
	& & \displaystyle+2\left|\partial_tRic\right|^2+2\left\langle Ric,\frac{\partial^2}{\partial t^2}Ric\right\rangle.
	\end{array}
	$$
\end{lemma}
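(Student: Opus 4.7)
The plan is a direct coordinate computation based on $|Ric_{g(t)}|^2 = g(t)^{ac}g(t)^{bd}R_{ab}(t)R_{cd}(t)$ and the standard identity $\partial_t g^{ab} = -g^{ap}g^{bq}h_{pq}$.

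For the first identity, apply the Leibniz rule. The contraction $g^{ac}g^{bd}(\cdot)_{ab}(\cdot)_{cd}$ is symmetric under the swap $(a,c)\leftrightarrow(b,d)$, so the four resulting terms collapse in pairs: the two Ricci-derivative terms sum to $2\langle Ric,\partial_t Ric\rangle$, and the two inverse-metric-derivative terms sum to $2(\partial_t g^{ac})g^{bd}R_{ab}R_{cd} = -2h^{ac}g^{bd}R_{ab}R_{cd}$. Using the definition $(A\circ B)_{ij} = g^{kl}A_{ki}B_{lj}$, a short index manipulation shows that $h^{ac}g^{bd}R_{ab}R_{cd} = \langle h, Ric\circ Ric\rangle$, yielding the first formula.

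For the second identity, differentiate the first at $t=0$, restricting to variations with $\partial_t^2 g|_{t=0}=0$ (the standard convention for second-variation calculations). Differentiating $2\langle Ric,\partial_t Ric\rangle$ produces $2|\partial_t Ric|^2 + 2\langle Ric,\partial_t^2 Ric\rangle$ together with two cross terms coming from $\partial_t g^{-1}$, each of which reduces to $-2\langle Ric\circ h,\partial_t Ric\rangle$ via the identity $h^{ab}g^{cd}(\partial_t R)_{ac}R_{bd} = \langle Ric\circ h,\partial_t Ric\rangle$. Differentiating $-2\langle Ric\circ Ric, h\rangle$ contributes two further $-2\langle Ric\circ h,\partial_t Ric\rangle$ terms from the Ricci factors, plus the metric-derivative contribution $2(\partial_t g^{ab})(\partial_t g^{cd})R_{ac}R_{bd} + 4(h\circ h)^{ab}g^{cd}R_{ac}R_{bd}$; here the factor of $4$ in the second piece comes from the nonlinear part $\partial_t^2 g^{ab}|_0 = 2(h\circ h)^{ab}$ of the double time derivative of $g^{-1}$. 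A parallel index manipulation identifies these two contributions with $2|Ric\circ h|^2$ and $4\langle h\circ h, Ric\circ Ric\rangle$ respectively; summing everything gives the claimed formula.

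The main obstacle is the algebraic bookkeeping, in particular the three nontrivial tensor identities $h^{ab}g^{cd}R_{ac}R_{bd} = \langle h, Ric\circ Ric\rangle$, $h^{ab}g^{cd}(\partial_t R)_{ac}R_{bd} = \langle Ric\circ h, \partial_t Ric\rangle$, and $h^{ab}h^{cd}R_{ac}R_{bd} = |Ric\circ h|^2$. The last is the most subtle: the contraction equals the squared norm of $Ric\circ h$ rather than the apparently similar $\langle h\circ h, Ric\circ Ric\rangle$ (these two agreeing only when $Ric$ and $h$ commute as endomorphisms), and it is precisely this distinction that produces the asymmetric $4+2$ split of the coefficients in the final formula.
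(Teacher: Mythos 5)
The paper offers no written proof of this lemma (it says only ``the proof is a direct calculation''), and your plan --- expanding $|Ric_{g(t)}|^2=g^{ac}g^{bd}R_{ab}R_{cd}$, using $\partial_t g^{ab}=-g^{ap}g^{bq}h_{pq}$ and $\partial_t^2 g^{ab}|_0=2(h\circ h)^{ab}$, and collecting terms --- is exactly that direct calculation. Your bookkeeping of the coefficients is right: the $\partial_t^2 g^{-1}$ term gives $4\langle h\circ h,Ric\circ Ric\rangle$, the $(\partial_t g^{-1})^2$ term gives $2\,h^{ab}h^{cd}R_{ac}R_{bd}$, the cross terms collapse to $-8\langle Ric\circ h,\partial_t Ric\rangle$, and the remainder is $2|\partial_t Ric|^2+2\langle Ric,\partial_t^2 Ric\rangle$. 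So the approach and the result match.

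One of your closing remarks is, however, reversed. Working with the $g$-self-adjoint endomorphisms $H=g^{-1}h$, $P=g^{-1}Ric$, the quantity you produce is $h^{ab}h^{cd}R_{ac}R_{bd}=\mathrm{tr}(HPHP)=\langle Ric\circ h,\,h\circ Ric\rangle$. On the other hand, under the usual tensor norm $|T|^2=g^{ia}g^{jb}T_{ij}T_{ab}$ one gets $|Ric\circ h|^2=\mathrm{tr}\bigl(PH\,(PH)^{\!*}\bigr)=\mathrm{tr}(PH H P)=\mathrm{tr}(H^2P^2)=\langle h\circ h,Ric\circ Ric\rangle$; these two are \emph{always} equal, contrary to what you assert. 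The quantity that genuinely differs from $\langle h\circ h,Ric\circ Ric\rangle$ --- agreeing with it precisely when $H$ and $P$ commute --- is $\langle Ric\circ h,\,h\circ Ric\rangle$, i.e.\ the actual term arising in the computation, not $|Ric\circ h|^2$. So the lemma's notation ``$|Ric\circ h|^2$'' for this term is strictly correct only under the no-transpose convention $|T|^2:=\mathrm{tr}\bigl((g^{-1}T)^2\bigr)$, and your remark identifying which pair of expressions can disagree should be corrected. This ambiguity is harmless for the paper, since the lemma is only used at a flat background metric where $Ric\equiv 0$ and all three of these terms vanish, but it is worth stating precisely. The rest of the derivation is sound.
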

%\begin{proof} By the identity $g_{ik}g^{kj}=\delta_{ij}$ we obtain that
%$\partial_t g^{ij}=-g^{ik}g^{jl}h_{kl}$
%and
%$\partial_t^2g^{ij}=2g^{ik}g^{mj}g^{nl}h_{mn}h_{kl}.$
%Thus
%$$\begin{array}{rcl}
%\displaystyle\frac{\partial}{\partial t}\left|Ric_{g(t)}\right|_{g(t)}^2 & = & \displaystyle\frac{\partial}{\partial t}\left(g^{ij}g^{kl}R_{ik}R_{jl}\right)\\
%& = &2\partial_tg^{ij}g^{kl}R_{ik}R_{jl} +2g^{ij}g^{kl}\partial_tR_{ik}R_{jl}\\
%& = & -2g^{im}g^{jn}g^{kl}h_{mn}R_{ik}R_{jl}+2\langle Ric_g,\partial_tRic_g\rangle\\
%& = & -2\langle Ric_g\circ Ric_g,h\rangle+2\langle Ric_g,\partial_tRic_g\rangle
%\end{array}
%$$
%and
%$$\begin{array}{rll}
%& \displaystyle\frac{\partial^2}{\partial t^2}\left|Ric_{g(t)}\right|_{g(t)}^2 = \displaystyle 2\partial_t^2g^{ij}g^{kl}R_{ik}R_{jl}+2\partial_tg^{ij}\partial_tg^{kl}R_{ik}R_{jl}\\
% &+8\partial_tg^{ij}g^{kl}R_{ik}\partial_tR_{jl} + 2g^{ij}g^{kl}\partial_tR_{ik}\partial_tR_{jl}+2g^{ij}g^{kl}R_{ik}\partial^2_tR_{jl}\\
%=& 4g^{ia}g^{mj}g^{nb}g^{kl}h_{mn}h_{ab}R_{ik}R_{jl}+2g^{im}g^{jn}g^{ka}g^{lb}h_{mn}h_{ab}R_{ik}R_{jl}\\
%&-8g^{im}g^{jn}g^{kl}h_{mn}R_{ik}\partial_tR_{jl} + 2\left|\partial_tRic\right|^2+2\langle Ric,\partial^2_tRic\rangle\\
%=& 4\langle h\circ h,Ric\circ Ric\rangle+2|Ric\circ h|^2-8\langle Ric\circ h,\partial_tRic\rangle\\
%&  + 2\left|\partial_tRic\right|^2+2\langle Ric,\partial^2_tRic\rangle.
%\end{array}
%$$
%\end{proof}

Thus we can find the linearization of the $\sigma_2$-curvature.

\begin{proposition}\label{prop001}
	The linearization of the $\sigma_2$-curvature is given by
	\begin{eqnarray*}
		\Lambda_g(h) & = & \frac{1}{2}\left\langle Ric_g,\Delta_gh+\nabla^2tr_gh+2\delta^*\delta h+2\mathring{R}(h)\right\rangle\\
		& & -\frac{n}{4(n-1)}R_g\left(\Delta_g tr_gh-\delta^2h+\langle Ric,h\rangle\right).
	\end{eqnarray*}
\end{proposition}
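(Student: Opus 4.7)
The plan is to differentiate the identity
$$\sigma_2(g)=-\tfrac{1}{2}|Ric_g|^2+\tfrac{n}{8(n-1)}R_g^2$$
along the one-parameter family $g(t)$ and assemble the pieces from the formulas already available in the paper. Writing $\Lambda_g(h)=\left.\frac{\partial}{\partial t}\right|_{t=0}\sigma_2(g(t))$, I split the computation into two parts: the $|Ric|^2$ term and the $R^2$ term.

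For the first part, I would apply the first identity of Lemma \ref{lema001} to get
$$-\tfrac{1}{2}\tfrac{\partial}{\partial t}|Ric_{g(t)}|^2=-\langle Ric_g,\partial_t Ric_g\rangle+\langle Ric_g\circ Ric_g,h\rangle,$$
then substitute $\partial_t Ric_g$ from (\ref{eq00-1}) and expand the Lichnerowicz Laplacian through the definition (\ref{eq028}). At this stage $\Delta_L h$ contributes the pieces $\Delta h+2\mathring{R}(h)$ together with $-Ric\circ h-h\circ Ric$. The decisive observation, which makes the formula collapse to the stated form, is the symmetry identity
$$\langle Ric_g,Ric_g\circ h+h\circ Ric_g\rangle=2\langle Ric_g\circ Ric_g,h\rangle,$$
obtained directly from the definition $(A\circ B)_{ij}=g^{kl}A_{ki}B_{lj}$ together with the symmetry of $Ric_g$ and $h$. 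This precisely cancels the extra term $\langle Ric_g\circ Ric_g,h\rangle$ coming from Lemma \ref{lema001}, and what remains is exactly
$$\tfrac{1}{2}\langle Ric_g,\Delta_gh+\nabla^2 tr_gh+2\delta^*\delta h+2\mathring{R}(h)\rangle.$$

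For the second part, a direct differentiation gives $\frac{\partial}{\partial t}R_g^2=2R_g\,\partial_t R_g$, and I substitute $\partial_tR_g$ from (\ref{eq000}) to obtain
$$\tfrac{n}{8(n-1)}\tfrac{\partial}{\partial t}R_g^2=-\tfrac{n}{4(n-1)}R_g\bigl(\Delta_g tr_gh-\delta^2h+\langle Ric_g,h\rangle\bigr).$$
Adding the two parts yields the stated expression for $\Lambda_g(h)$.

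The argument is a bookkeeping exercise rather than a conceptual obstruction: the only step that is not mechanical is recognizing the cancellation of the $\langle Ric_g\circ Ric_g,h\rangle$ terms between Lemma \ref{lema001} and the contribution of $Ric\circ h+h\circ Ric$ inside $\Delta_L h$, since without that observation one would be left with a spurious pointwise quadratic term and the formula would not match the clean form in the statement. Once this cancellation is in place, the remaining computation is linear in the evolution formulas already listed.
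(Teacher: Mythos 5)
Your proposal is correct and follows the paper's proof essentially verbatim: differentiate (\ref{eq002}) via Lemma \ref{lema001} and (\ref{eq000}), substitute (\ref{eq00-1}), and expand the Lichnerowicz Laplacian from (\ref{eq028}). The paper compresses the final step into the phrase ``by the definition of the Lichnerowicz Laplacian we obtain the result,'' whereas you make the cancellation $\langle Ric_g, Ric_g\circ h + h\circ Ric_g\rangle = 2\langle Ric_g\circ Ric_g, h\rangle$ explicit, which is exactly what the paper is silently invoking.
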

\begin{proof} Let $\{g(t)\}$ be a family of metrics as before. By   (\ref{eq002}), (\ref{eq00-1}), (\ref{eq000}) and Lemma \ref{lema001} we get 
	$$\begin{array}{rcl}
	\displaystyle\frac{\partial }{\partial t}\sigma_2(g) 
	%& = & \displaystyle-\frac{1}{2}\frac{\partial }{\partial t}|Ric_g|^2+\frac{n}{4(n-1)}R_g\frac{\partial}{\partial t}R_g\\
	& = & \displaystyle\langle Ric_g\circ Ric_g,h \rangle+\frac{1}{2}\left\langle Ric_g,\Delta_L h+\nabla^2tr_g  h+2\delta^*\delta h\right\rangle\\
	& & \displaystyle-\frac{n}{4(n-1)}R_g\left(\Delta_g tr_gh-\delta^2h+\langle Ric,h\rangle\right)
	\end{array}$$
	and then, by the definition of the Lichnerowicz Laplacian (\ref{eq028}), we obtain the result.
\end{proof}

Therefore we can find the expression of $\Lambda^*_g$ which the proof is a simple calculation using integration by parts and taking compactly supported symmetric 2-tensor $h\in S_2(M)$.

\begin{proposition}\label{lemma001}
	The $L^2$-formal adjoint of the operator $\Lambda_g:S_2(M)\rightarrow C^\infty(M)$ is the operator $\Lambda_g^*:C^\infty(M) \rightarrow S_2(M)$ given by
	$$\begin{array}{rcl}
	\Lambda_g^*(f) & = & \displaystyle\frac{1}{2}\Delta_g(fRic_g)+\frac{1}{2}\delta^2(fRic_g)g+\delta^*\delta(fRic_g)+f\mathring{R}(Ric_g)\\
	& & -\displaystyle\frac{n}{4(n-1)}\left(\Delta_g(fR_g)g-\nabla^2(fR_g)+fR_gRic_g\right).
	\end{array}$$
\end{proposition}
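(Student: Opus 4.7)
The strategy is the one indicated just before the statement: pair $\Lambda_g(h)$ with an arbitrary test function $f$, integrate over $M$ against $dv_g$, and integrate by parts (using that $h$ has compact support so no boundary terms appear) to move every differential operator acting on $h$ or on its trace onto the product of $f$ with the ambient curvature tensor. What is left is a pointwise pairing $\langle\,\cdot\,,h\rangle$; by the defining identity $\int_M f\Lambda_g(h)\,dv_g=\int_M\langle\Lambda_g^*(f),h\rangle\,dv_g$, the tensor appearing in the first slot is precisely $\Lambda_g^*(f)$.

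Working from the formula in Proposition~\ref{prop001}, I would process the seven summands in order. For $\tfrac{1}{2}\langle Ric_g,\Delta_g h\rangle$, two integrations by parts on $\Delta_g$ together with the fact that $f$ commutes past the inner product produce $\tfrac{1}{2}\langle\Delta_g(fRic_g),h\rangle$. For $\tfrac{1}{2}\langle Ric_g,\nabla^2 tr_g h\rangle=\tfrac{1}{2}Ric_g^{ij}\nabla_i\nabla_j(tr_g h)$, I would integrate by parts twice so that both covariant derivatives land on $fRic_g$, recognise $\nabla^i\nabla^j(fRic_g)_{ij}$ as $\delta^2(fRic_g)$, and rewrite $tr_g h=\langle g,h\rangle$, yielding $\tfrac{1}{2}\langle\delta^2(fRic_g)\,g,h\rangle$. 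For $\langle Ric_g,\delta^*\delta h\rangle$, two uses of the adjointness $\int\langle T,\delta^*\omega\rangle\,dv_g=\int\langle\delta T,\omega\rangle\,dv_g$ (applied first to $T=fRic_g$, $\omega=\delta h$, then to $T'=\delta(fRic_g)$ paired via $\delta^*$) give $\langle\delta^*\delta(fRic_g),h\rangle$. The term $\langle Ric_g,\mathring{R}(h)\rangle$ is purely algebraic; combining the pair-swap symmetry $R_{abcd}=R_{cdab}$ with the antisymmetry in each pair yields $\langle Ric_g,\mathring{R}(h)\rangle=\langle\mathring{R}(Ric_g),h\rangle$ pointwise, so this summand contributes $f\mathring{R}(Ric_g)$.

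The three terms in the $R_g$-piece are handled the same way. From $\Delta_g tr_g h=\Delta_g\langle g,h\rangle$, two integrations by parts shift $\Delta_g$ onto $fR_g$ to give $\langle\Delta_g(fR_g)\,g,h\rangle$. From $\delta^2 h=\nabla^i\nabla^j h_{ij}$, two integrations by parts produce the Hessian $\nabla^2(fR_g)$ paired with $h$. The last contribution $R_g\langle Ric_g,h\rangle$ is already pointwise and gives $fR_gRic_g$. Collecting the seven contributions with the coefficients $\tfrac12$, $\tfrac12$, $1$, $1$ and $-\tfrac{n}{4(n-1)}$ dictated by Proposition~\ref{prop001} reproduces exactly the stated formula for $\Lambda_g^*(f)$.

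The only point that is not pure bookkeeping is the self-adjointness of $\mathring{R}$ as an operator on $S_2(M)$, and this follows directly from the algebraic symmetries of the Riemann tensor. Once that identity is in hand, the remainder of the proof is a straightforward, if somewhat lengthy, application of the divergence theorem on a closed manifold to a compactly supported test tensor.
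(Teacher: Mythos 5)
Your proposal is correct and follows exactly the approach the paper indicates (the paper states only that the proof is "a simple calculation using integration by parts and taking compactly supported symmetric 2-tensor $h\in S_2(M)$"). The term-by-term integration by parts, including the verification that $\mathring{R}$ is pointwise self-adjoint on $S_2(M)$ via the pair symmetry and antisymmetries of the Riemann tensor, correctly reproduces the stated formula for $\Lambda_g^*(f)$.
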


This implies that
\begin{equation}\label{eq003}
tr_g\Lambda_g^*(f)=\frac{2-n}{4}R_g\Delta_gf+\frac{n-2}{2}\langle \nabla^2f,Ric_g\rangle-2\sigma_2(g)f.
\end{equation}

For any smooth vector field $X\in\mathcal X(M)$, if we denote by $X^*$ its dual form, that is, $X^*(Y)=g(X,Y)$ for all $Y\in\mathcal X(M)$, then for any $f\in C^\infty(M)$ we have
$$\int_M\langle X^*,\delta\Lambda_g^*(f)\rangle dv_g=\int_M\langle \delta^*X^*,\Lambda_g^*(f)\rangle dv_g=\frac{1}{2}\int_Mf\Lambda_g(\mathcal L_Xg)dv_g,$$
since $\delta^*X^*=\frac{1}{2}\mathcal L_Xg$. Let $\varphi_t$ be the one-parameter group of diffeomorphisms generate by $X$ such that $\varphi_0=id_M$. Since the $\sigma_2$-curvature is invariant by diffeomorphisms, we have
$\sigma_2({\varphi_t^*g})=\varphi_t^*(\sigma_2(g)).$ This implies that
$$\Lambda_g(\mathcal L_Xg)=D\sigma_2(g)(\mathcal L_Xg)=\langle d\sigma_2(g),X^*\rangle$$
and so
$$\int_M\langle X^*,\delta\Lambda_g^*(f)\rangle dv_g=\frac{1}{2}\int_M \langle fd\sigma_2(g),X^*\rangle dv_g.$$
Therefore
\begin{equation}\label{eq005}
\delta\Lambda_g^*(f)=\frac{1}{2}fd\sigma_2(g).
\end{equation}

By (\ref{eq003}) and (\ref{eq005}) we obtain
\begin{equation}\label{eq004}
tr_g\Lambda_g^*(1)=-2\sigma_2(g)
\end{equation}
and
\begin{equation}\label{eq026}
div_g \Lambda_g^*(1)=-\frac{1}{2}d\sigma_2(g).
\end{equation}

The relations (\ref{eq004}) and (\ref{eq026}) are similar to the relations between the Ricci tensor and the scalar curvature, namely $R_g=tr_gRic_g$ and $div_gRic_g=\frac{1}{2}dR_g$.

In \cite{MR3019161} and \cite{MR3207587}, Cheng generalized the so-called almost-Schur lemma by De Lellis and Topping  \cite{MR2875643} which is close related to the classical Schur's Lemma for Einstein manifolds. As a consequence of (\ref{eq004}), (\ref{eq026}) and the Theorem 1.8 in \cite{MR3207587} we find an almost-Schur type lemma  for the $\sigma_2$-curvature.  

\begin{theorem}
	Let $(M,g)$ be a closed Riemannian manifold of dimension $n\not=4$. Suppose that $Ric_g\geq -(n-1)K$ for some constant $K\geq 0$. Then
	$$
	\int_M(\sigma_2(g)-\overline{\sigma_2(g)})^2\leq\frac{8n(n-1)}{(n-4)^2}\left(1+\frac{nK}{\lambda_1}\right)\int_M\left|\Lambda_g^*(1)+\frac{2}{n}\sigma_2(g)g\right|^2,
	$$
	or equivalently
	$$
	\begin{array}{c}
	\displaystyle\int_M\left|\Lambda_g^*(1)+\frac{2}{n}\overline{\sigma_2(g)}g\right|^2
	\displaystyle\leq \left(1+\frac{16(n-1)}{(n-4)^2}\left(1+\frac{K}{\lambda_1}\right)\right)
	\displaystyle\int_M\left|\Lambda_g^*(1)+\frac{2}{n}\sigma_2(g)g\right|^2
	\end{array}
	$$
	where $\overline{\sigma_2(g)}$ denotes the average of $\sigma_2(g)$ over $M$, $\lambda_1$ denotes the first nonzero eigenvalue of the Laplace operator on $(M,g)$. If $Ric_g>0$  then the equality holds if and only if $\Lambda_g^*(1)=-\frac{2}{n}\sigma_2(g)g$, with constant $\sigma_2$-curvature.
	
\end{theorem}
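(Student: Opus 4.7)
The plan is to obtain both stated inequalities as a direct corollary of Cheng's generalized almost-Schur inequality (Theorem 1.8 in \cite{MR3207587}), applied to the symmetric 2-tensor $T := -\Lambda_g^*(1)$. The crucial point is that equations (\ref{eq004}) and (\ref{eq026}) place $T$ in exactly the same algebraic position relative to $\sigma_2(g)$ that the Ricci tensor occupies relative to the scalar curvature: $\operatorname{tr}_g T = 2\sigma_2(g)$ and
$$\operatorname{div}_g T \;=\; \tfrac{1}{2}\, d\sigma_2(g) \;=\; \tfrac{1}{4}\, d(\operatorname{tr}_g T).$$
Thus $T$ satisfies the hypothesis of Cheng's theorem with ``divergence coefficient'' $c = 1/4$, whereas for Ricci one has $c = 1/2$.

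Next I would apply Cheng's inequality to $T$. Under the lower Ricci bound $Ric_g \geq -(n-1)K$, his theorem yields an $L^2$-estimate of the schematic form
$$\int_M \left(\operatorname{tr}_g T - \overline{\operatorname{tr}_g T}\right)^2 dv_g \;\leq\; \frac{C(n,c)}{\lambda_1}\left(1+\frac{nK}{\lambda_1}\right) \int_M \left| T - \tfrac{\operatorname{tr}_g T}{n}\, g \right|^2 dv_g,$$
where the constant contains a factor $(nc-1)^{-2}$ produced by the Bochner-type integration by parts against a Poisson potential for $\operatorname{tr}_g T - \overline{\operatorname{tr}_g T}$. For $c = 1/4$ this factor equals $16/(n-4)^2$, which is exactly what forces the dimensional restriction $n \neq 4$. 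Substituting $\operatorname{tr}_g T = 2\sigma_2(g)$ (so that the left-hand side becomes $4\int(\sigma_2 - \overline{\sigma_2})^2$) and $T = -\Lambda_g^*(1)$ (so that the right-hand side integrand is $|\Lambda_g^*(1) + \tfrac{2}{n}\sigma_2(g)g|^2$) and absorbing the $4$ into the constant, the first inequality emerges with the stated coefficient $\tfrac{8n(n-1)}{(n-4)^2}(1+\tfrac{nK}{\lambda_1})$.

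The equivalent reformulation is pure algebra: writing
$$\Lambda_g^*(1) + \tfrac{2}{n}\overline{\sigma_2(g)}\,g \;=\; \bigl(\Lambda_g^*(1) + \tfrac{2}{n}\sigma_2(g)\,g\bigr) + \tfrac{2}{n}\bigl(\overline{\sigma_2(g)} - \sigma_2(g)\bigr)g,$$
expanding the $L^2$-norm, using that the trace-free part of $\Lambda_g^*(1) + \tfrac{2}{n}\sigma_2(g)g$ is pointwise orthogonal to the pure-trace perturbation and that $|g|^2 = n$, one picks up an extra term proportional to $\int_M (\sigma_2 - \overline{\sigma_2})^2$, which is bounded by the first inequality. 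The bookkeeping then delivers the multiplicative constant $1 + \tfrac{16(n-1)}{(n-4)^2}\bigl(1+\tfrac{K}{\lambda_1}\bigr)$.

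For the equality case, under $Ric_g > 0$ Cheng's theorem characterizes equality by $T = \tfrac{\operatorname{tr}_g T}{n}g$ with $\operatorname{tr}_g T$ constant; in our variables this reads $\Lambda_g^*(1) = -\tfrac{2}{n}\sigma_2(g) g$ with $\sigma_2(g)$ constant, as claimed. The only genuine content of the argument is the identification of $T$ and the two identities (\ref{eq004})--(\ref{eq026}), both already in hand; there is no real obstacle, since the analytic work has been done in \cite{MR3207587}. The potentially delicate point, should one wish to give a self-contained argument, would be reproducing Cheng's refinement of the De Lellis--Topping test-function method with $c = 1/4$ and tracking the sharp dependence on $K/\lambda_1$, but we only need to cite it.
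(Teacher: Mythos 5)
Your proposal matches the paper's argument exactly: the paper itself simply invokes Cheng's Theorem~1.8 in~\cite{MR3207587} together with the two identities $\operatorname{tr}_g\Lambda_g^*(1)=-2\sigma_2(g)$ and $\operatorname{div}_g\Lambda_g^*(1)=-\tfrac12 d\sigma_2(g)$, which is precisely your setup $T=-\Lambda_g^*(1)$ with $\operatorname{div}_g T=\tfrac14 d(\operatorname{tr}_g T)$ and hence $c=1/4$ (and thus $(nc-1)^{-2}=16/(n-4)^2$). The reduction of the second inequality to the first via the pointwise-orthogonal trace/trace-free split is likewise the standard bookkeeping, so there is no genuine difference in route.
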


%Note that by (\ref{eq011}) if $T_g=\frac{\sigma_2(g)}{n}g$ then the $\sigma_2$-curvature is constant. Therefore we can see this like a generalization of the Schur lemma for Einstein manifold. In this case we can say that the manifold is $T_g$-Einstein.

In \cite{MR3207587} the author has proved an almost-Schur lemma for the $\sigma_k$-curvature in locally conformally flat manifold.  In this case, he used the Newton Transformations associated with $A_g$ and a result by Viaclovsky \cite{V} to get a divergence free tensor and the identity (\ref{eq026}).

\section{$\sigma_2$-singular space}\label{sec04}

In this section we use the definition of the $\sigma_2$-singular space, Definition \ref{def001}, to prove some interesting results. First we obtain that under a certain hypothesis a $\sigma_2$-singular space has constant $\sigma_2$-curvature. Then we classify all closed $\sigma_2$-singular Einstein manifold with $\sigma_2$-curvature. Finally in the spirit of Fischer-Marsden \cite{MR0380907} we prove the local surjectivity of the $\sigma_2$-curvature.

%\subsection{Preliminaries Results and a Rigidity Theorem}

Our first result in this section reads as follows.

\begin{theorem}[Theorem \ref{teo001}]\label{teo004}
	Let $(M^n,g,f)$ be a $\sigma_2$-singular space.
	\begin{enumerate}[(a)]
		\item If $Ric_g-\frac{1}{2}R_gg$ has a sign in the tensorial sense, then the $\sigma_2$-curvature is constant.
		\item If $f$ is a nonzero constant function, then the $\sigma_2$-curvature is identically zero.
	\end{enumerate}
\end{theorem}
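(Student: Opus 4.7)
My plan is to combine the two identities \eqref{eq003} and \eqref{eq005} established for $\Lambda_g^*$, running the argument in the same spirit as the analogous results for scalar- and $Q$-curvature singular spaces.

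For part (b), I would argue directly by linearity of $\Lambda_g^*$. If the nontrivial element $f\in\ker\Lambda_g^*$ is a nonzero constant, then $\Lambda_g^*(1)=0$, and the trace relation \eqref{eq004} immediately yields $-2\sigma_2(g)=\operatorname{tr}_g\Lambda_g^*(1)=0$, so $\sigma_2(g)\equiv 0$.

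For part (a), the starting point is to apply $\delta$ to the equation $\Lambda_g^*(f)=0$. By the divergence identity \eqref{eq005} one obtains the pointwise relation $f\,d\sigma_2(g)=0$ on $M$. Thus $d\sigma_2(g)$ vanishes on the open set $\{f\neq 0\}$, and since $M$ is connected it suffices to show that $\{f\neq 0\}$ is dense to conclude that $\sigma_2(g)$ is constant. The crucial step is to rule out $f$ vanishing on any nonempty open $V\subset M$. For this I would use the trace of $\Lambda_g^*(f)=0$ provided by \eqref{eq003}, which, after a small rearrangement and setting $T:=Ric_g-\tfrac{1}{2}R_g g$, reads
\[
\langle \nabla^2 f,\, T\rangle \;=\; \frac{4}{n-2}\,\sigma_2(g)\, f.
\]
The hypothesis in (a) says $T$ has a fixed sign on $M$, so this is a linear second-order scalar PDE for $f$ whose principal part $T^{ij}\nabla_i\nabla_j$ is (at worst degenerate) elliptic of definite sign. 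If $f$ were to vanish on an open set, Aronszajn--Cordes unique continuation would force $f\equiv 0$, contradicting nontriviality. Hence $\{f\neq 0\}$ is open and dense in $M$, and continuity then promotes $d\sigma_2(g)=0$ to all of $M$, yielding $\sigma_2(g)$ constant.

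The step I expect to be the most delicate is the rigorous invocation of unique continuation: if $T$ is strictly of one sign, the trace PDE is uniformly elliptic and the classical theorem applies directly, but when $T$ is merely semi-definite one must either strengthen the hypothesis or exploit the full tensorial equation $\Lambda_g^*(f)=0$ as an overdetermined elliptic system rather than its scalar trace alone. Everything else in the argument is purely algebraic, following from the two canonical identities \eqref{eq004} and \eqref{eq005} for $\Lambda_g^*$ derived in Section \ref{sec01}.
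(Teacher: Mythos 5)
Your proposal is correct and takes essentially the same approach as the paper: both derive $f\,d\sigma_2(g)=0$ from the divergence identity \eqref{eq005}, identify the elliptic scalar equation \eqref{eq013} from the trace identity \eqref{eq003}, and invoke Aronszajn--Cordes unique continuation to force the constancy of $\sigma_2(g)$, while part (b) is word-for-word the paper's argument via \eqref{eq004}. The only cosmetic difference is that the paper differentiates $f\,d\sigma_2(g)=0$ repeatedly at a point where $d\sigma_2(g)\neq 0$ to show $f$ vanishes there to infinite order (strong unique continuation), whereas you argue that $\{f\neq 0\}$ is dense by ruling out $f$ vanishing on an open set (weak unique continuation) --- both are covered by the cited references, and your caveat about the semi-definite case (the paper tacitly reads ``has a sign in the tensorial sense'' as giving uniform ellipticity of \eqref{eq013}) is a fair observation.
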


\begin{proof}
	In the equation (\ref{eq005}) we obtained that
	$$\mbox{div }\Lambda_g^*(f)=-\frac{1}{2}fd\sigma_2(g).$$
	
	Since $(M^n,g,f)$ is a $\sigma_2$-singular space, then $\Lambda_g^*(f)=0$ and so $fd\sigma_2(g)=0.$ Suppose that there exists an $x_0\in M$ with $f(x_0)=0$ and $d\sigma_2(g)_{x_0}\neq 0.$ By taking derivatives, we can see that  $\nabla^mf(x_0)=0$ for all $\ m\geq 1.$ Moreover, note that by (\ref{eq003}) the function $f$ satisfies 
	%$$\frac{2-n}{4}R_g\Delta_gf+\frac{n-2}{2}\langle \nabla^2f,Ric_g\rangle-2\sigma_2(g)f=0$$
	%which is equivalent to
	\begin{equation}\label{eq013}
	\frac{n-2}{2}\left\langle \nabla^2f,Ric_g-\frac{1}{2}R_gg\right\rangle-2\sigma_2(g)f=0.
	\end{equation}
	
	By hypothesis $Ric_g-\frac{1}{2}R_gg$ has a sign in the tensorial sense, which implies that (\ref{eq013}) is an elliptic equation. By results in \cite{MR0076155} and \cite{MR0086237} we can conclude that $f$ vanishes identically in $M.$ But this is a contradiction. Therefore, $d\sigma_2(g)$ vanishes in $M$ and thus $\sigma_2(g)$ is costant.
	
	Now, if $f$ is a nonzero constant, then we can suppose that the constant is equal to 1. Therefore, by (\ref{eq004}) we obtain
	$$tr_g\Lambda_g^*(1)=-2\sigma_2(g)=0.$$
\end{proof}

\begin{lemma}\label{lemma002}
	Let $(M^n, g)$ be an Einstein manifold with dimension $n\geq 3$, then
	\begin{equation}\label{eq0002}
	\Lambda_g^*(f)=\frac{(n-2)^2}{4n(n-1)}\left(R_g\nabla^2f-R_g(\Delta_gf)g-\frac{R_g^2}{n}fg\right).
	\end{equation}
	Moreover, if $f\in\mbox{ker }\Lambda_g^*$ then
	\begin{equation}\label{eq006}
	R_g\nabla_g^2f+\frac{1}{n(n-1)}R^2_gfg=0.
	\end{equation}
\end{lemma}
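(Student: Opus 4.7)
The plan is to specialize the general formula for $\Lambda_g^*$ from Proposition \ref{lemma001} to the Einstein setting and simplify, then obtain the second identity from the first by a trace argument.

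First I would substitute $Ric_g=\frac{R_g}{n}g$ (with $R_g$ constant, as the manifold is Einstein and $n\geq 3$) into each of the four terms making up $\Lambda_g^*(f)$. The key identities I would need are the standard ones: $\delta(fg)=-df$, whence $\delta^2(fg)=\Delta_g f$ and $\delta^*\delta(fg)=-\delta^*(df)=-\nabla^2 f$; the fact that $R_g$ being constant lets one pull it outside $\Delta_g$ and $\nabla^2$; and the identity $\mathring{R}(g)=Ric_g$ (a direct consequence of the index conventions in the definition of $\mathring{R}$ combined with the usual Riemann-tensor symmetries). With these in hand, each of the four terms becomes a scalar multiple of one of $\nabla^2 f$, $(\Delta_g f)g$, or $fg$.

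Next I would collect coefficients. The $\nabla^2 f$ terms contribute $\bigl(-\frac{R_g}{n}+\frac{nR_g}{4(n-1)}\bigr)\nabla^2 f$, which simplifies to $\frac{(n-2)^2}{4n(n-1)}R_g\nabla^2 f$; the $(\Delta_g f)g$ terms and the $fg$ terms collapse in exactly the parallel fashion, producing the cleanly-factored expression
\begin{equation*}
\Lambda_g^*(f)=\frac{(n-2)^2}{4n(n-1)}\left(R_g\nabla^2 f-R_g(\Delta_g f)g-\frac{R_g^2}{n}fg\right),
\end{equation*}
which is (\ref{eq0002}). The little algebraic miracle here is the appearance of $(n-2)^2$ as a common factor; this I would double-check by also recomputing $tr_g\Lambda_g^*(f)$ and comparing with formula (\ref{eq003}) specialized to the Einstein case, as a consistency check.

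For the second identity, assume $f\in\ker\Lambda_g^*$. Then the bracketed expression in (\ref{eq0002}) vanishes (assuming $n\neq 2$, which is covered by $n\geq 3$). Taking the metric trace gives $R_g\Delta_g f-nR_g\Delta_g f-R_g^2 f=0$, i.e.\ $R_g\Delta_g f=-\frac{R_g^2}{n-1}f$. Substituting this back to eliminate the $(\Delta_g f)g$ term and combining $\frac{1}{n-1}-\frac{1}{n}=\frac{1}{n(n-1)}$ yields exactly $R_g\nabla^2 f+\frac{1}{n(n-1)}R_g^2 fg=0$, which is (\ref{eq006}). The only real obstacle is bookkeeping: tracking the signs coming from $\delta=-\mathrm{div}$ through the three adjoint-flavored terms, and correctly handling $\mathring{R}(Ric_g)=\frac{R_g}{n}\mathring{R}(g)=\frac{R_g^2}{n^2}g$ so that the final $fg$-coefficient lines up.
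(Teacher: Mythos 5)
Your proposal is correct and follows essentially the same route as the paper: specialize Proposition \ref{lemma001} to the Einstein case using $Ric_g=\frac{R_g}{n}g$ and constant $R_g$, collect coefficients to reveal the common $\frac{(n-2)^2}{4n(n-1)}$ factor, then take a trace to get $R_g\Delta_g f=-\frac{R_g^2}{n-1}f$ and substitute back. (The paper derives that trace relation from its already-recorded identity \eqref{eq003} together with \eqref{eq001} rather than by tracing \eqref{eq0002} directly, but this is the same computation.)
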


\begin{proof}
	Since $(M,g)$ is Einstein, then $Ric_g=\frac{R_g}{n}g$, $R_g$ is constant and
	\begin{equation}\label{eq001}
	\sigma_2(g)=\frac{(n-2)^2}{8n(n-1)}R_g^2.
	\end{equation}
	
	By Proposition \ref{lemma001} we get
	\begin{eqnarray*}
		\Lambda_g^*(f)&=&\frac{1}{2}\Delta_g(fRic_g)+\frac{1}{2}div(div(fRic_g))g+\delta^*\delta(fRic_g)+f\mathring{R}(Ric_g)\\
		&-&\frac{n}{4(n-1)}\left(\Delta_g(fR_g)g-\nabla^2(fR_g)+fR_gRic_g\right)\\
		&=&\frac{R_g}{n}(\Delta_gf)g-\frac{R_g}{n}\nabla_g^2f+f\frac{R_g^2}{n^2}g\\
		& & -\frac{n}{4(n-1)}\left(R_g(\Delta_gf)g-R_g\nabla_g^2f+f\frac{R_g^2}{n}g\right)\\
		&=&-R_g\Delta_gfg\left(\frac{(n-2)^2}{4n(n-1)}\right)+R_g\nabla^2_gf\left(\frac{(n-2)^2}{4n(n-1)}\right)\\
		&-&R_g^2gf\left(\frac{(n-2)^2}{4n^2(n-1)}\right).\\      
	\end{eqnarray*}
	
	Therefore we get (\ref{eq0002}). Now, if $f\in ker\ \Lambda^*_g$, then by (\ref{eq003}) and (\ref{eq001}) we get $R_g\Delta_gf=-\frac{R_g^2}{n-1}f.$ Thus we obtain (\ref{eq006}).
\end{proof}

%With the help of the Proposition \ref{lemma001} and Lemma \ref{lemma002} we can give example of $\sigma_2$-singular space. 

The trivial examples of $\sigma_2$-singular space are Ricci flat spaces, because  by Proposition \ref{lemma001} we have $\Lambda_g^*\equiv0.$

%The second example is the {\it round sphere} $\mathbb{S}^n=\{x\in\mathbb{R}^{n+1};|x|^2=1\}$. In this case, 

Let $\mathbb M$ be the {\it round sphere} $\mathbb S^n\subset\mathbb R^{n+1}$ or the {\it hyperbolic space} 
$$\mathbb{H}^n=\left\{(x',x_{n+1})\in\mathbb{R}^{n+1};|x'|^2-|x_{n+1}|^2=-1,x'\in\mathbb{R}^n,x_{n+1}>0\right\}.$$

It is well known that if we take the function $f$ defined in $\mathbb M$ by $f(x)=x_k$ for some $k\in\{1,\dots,n+1\}$, then 
$$\nabla^2 f+\delta gf=0,$$
where $\delta=+1$ in the sphere and $\delta=-1$ in the hyperbolic space. Since the scalar curvature of $\mathbb M$ is $\delta n(n-1)$, then  by Lemma \ref{lemma002} we have $\Lambda_g^*(f)=0$. We point up that these example are also consequence of Tashiro's work \cite{T}.

Up to isometries, we will prove in Theorem \ref{teo005} that the round sphere is the only closed $\sigma_2$-singular Einstein manifold with positive $\sigma_2$-curvature. 

% In the unit round sphere $\mathbb S^n\subset\mathbb R^{n+1}$ if we take the function $f$ defined in $\mathbb S^n$ by $f(x)=x_k$, for some $k\in\{1,\dots,n+1\}$. Then, it is well known that
%  $$\nabla^2 f+gf=0.$$
%  Since the scalar curvature of the round sphere is $n(n-1)$, then  by Lemma \ref{lemma002} we have $\Lambda_g^*(f)=0$. In fact, up to isometries, we will show in the Theorem \ref{teo005} that the round sphere is the only closed $\sigma_2$-singular Einstein manifold with positive $\sigma_2$-curvature.
%  
% The third example is the {\it hyperbolic space} 
% $$\mathbb{H}^n=\left\{(x',x_{n+1})\in\mathbb{R}^{n+1};|x'|^2-|x_{n+1}|^2=-1,x'\in\mathbb{R}^n,x_{n+1}>0\right\}.$$ 
% Again we consider the function $f$ defined in $\mathbb H^n$ by $f(x)=x_k$ for some $k\in\{1,\dots,n+1\}$. Then we have that
%  $$\nabla^2 f-gf=0.$$ 
%  Since the scalar curvature is equal to $-n(n-1),$ by the Lemma \ref{lemma002} we have $\Lambda_g^*(f)=0$.

\begin{lemma}\label{prop14}
	If $\sigma_2(g)>0$ and $(M^n,g,f)$ is a closed $\sigma_2$-singular Einstein mani-fold, then $R_g>0.$
\end{lemma}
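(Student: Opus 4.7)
The plan is to combine the identity from Lemma \ref{lemma002} with the spectral constraint that a closed Riemannian manifold places on eigenfunctions of $\Delta_g$. Since $(M,g)$ is Einstein, equation (\ref{eq001}) gives $\sigma_2(g) = \frac{(n-2)^2}{8n(n-1)} R_g^2$, so the hypothesis $\sigma_2(g) > 0$ immediately forces $R_g^2 > 0$, that is $R_g \neq 0$. The task reduces to ruling out $R_g < 0$.

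Next, since $f \in \ker \Lambda_g^*$ and $R_g \neq 0$, I would divide equation (\ref{eq006}) through by $R_g$ to obtain
$$\nabla_g^2 f = -\frac{R_g}{n(n-1)} f g,$$
and then take the trace to get $\Delta_g f = -\frac{R_g}{n-1} f$, so $f$ is an eigenfunction of $\Delta_g$ with eigenvalue $-\frac{R_g}{n-1}$.

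I would then test this equation against $f$ and integrate by parts over the closed manifold $M$:
$$\int_M |\nabla f|^2 \, dv_g = -\int_M f \Delta_g f \, dv_g = \frac{R_g}{n-1} \int_M f^2 \, dv_g.$$
Since $f$ is nontrivial, $\int_M f^2 \, dv_g > 0$, and the left-hand side is nonnegative, so $R_g \geq 0$. Combined with $R_g \neq 0$, this yields $R_g > 0$.

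The only subtlety, which I would handle separately, is the edge case $\int_M |\nabla f|^2 \, dv_g = 0$: then $f$ is a nonzero constant, and substituting $\nabla_g^2 f = 0$ back into the Hessian identity gives $\frac{R_g}{n(n-1)} f g = 0$, forcing $R_g = 0$, which contradicts $\sigma_2(g) > 0$. So in fact $f$ is nonconstant and the inequality is strict. There is no real obstacle here; the main point is simply recognizing that Lemma \ref{lemma002} turns the singular-space equation on an Einstein background into an eigenvalue problem for $-\Delta_g$, whose spectrum on a closed manifold is nonnegative.
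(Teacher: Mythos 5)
Your proof is correct and follows essentially the same route as the paper: use (\ref{eq001}) to get $R_g\neq 0$, reduce to the eigenvalue equation $\Delta_g f=-\frac{R_g}{n-1}f$ via Lemma \ref{lemma002}, and integrate by parts. The only difference is organizational: the paper invokes Theorem \ref{teo004}(b) to conclude $f$ is nonconstant and hence $\int_M|\nabla f|^2>0$ strictly, whereas you work with the weaker nonnegativity $\int_M|\nabla f|^2\geq 0$ and combine the resulting $R_g\geq 0$ with $R_g\neq 0$, which is marginally more economical and makes your edge-case discussion an unnecessary aside rather than a needed step.
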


\begin{proof}
	Since $(M,g)$ is an Einstein manifold, then by (\ref{eq001}) we obtain that $R_g\neq 0.$ By Theorem \ref{teo004} the function $f$ is not constant, and by (\ref{eq006}) in Lemma \ref{lemma002}, it satisfies
	
	$$\nabla_g^2f+\frac{1}{n(n-1)}R_gfg=0.$$ 
	Taking trace, we obtain
	$$\Delta_gf+\frac{R_g}{n-1}f=0.$$
	
	Thus
	$$
	\frac{R_g}{n-1}\int_Mf^2dv_g=-\int_Mf\Delta_gfdv_g=\int_M|\nabla_gf|^2dv_g>0,
	$$
	since $f$ is not a constant function. This implies that $R_g>0.$
\end{proof}

By (\ref{eq001}) we see that does not exist Einstein manifold $(M^n,g)$ with negative $\sigma_2$-curvature.  Using Lemma \ref{prop14} we get by contradiction the
%The next result shows that Einstein manifolds with negative scalar curvature cannot be $\sigma_2$-singular space.

\begin{theorem} Let $(M^n, g)$ be a closed Einstein manifold with negative scalar curvature. Then
	$$ker\Lambda^*_g=\{0\}$$
	that is, $(M^n, g)$  cannot be a $\sigma_2$-singular space.
\end{theorem}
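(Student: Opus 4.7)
The proof is a short contradiction argument that packages together formula (\ref{eq001}) with Lemma \ref{prop14}. The plan is to suppose there exists a nontrivial $f\in\ker\Lambda_g^*$ on a closed Einstein manifold with $R_g<0$ and derive a sign contradiction on $R_g$.

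First I would observe that on any Einstein manifold, formula (\ref{eq001}) gives
\[
\sigma_2(g)=\frac{(n-2)^2}{8n(n-1)}R_g^2,
\]
which is strictly positive whenever $R_g\neq 0$. In particular, the hypothesis $R_g<0$ automatically forces $\sigma_2(g)>0$, so the triple $(M,g,f)$ meets exactly the hypotheses of Lemma \ref{prop14}.

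Applying Lemma \ref{prop14} then yields $R_g>0$, which directly contradicts the standing assumption $R_g<0$. Hence $\ker\Lambda_g^*=\{0\}$, as claimed.

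I do not expect any genuine obstacle here: the analytic content — extracting from the kernel equation (\ref{eq006}) that $f$ is an eigenfunction of $\Delta_g$ with eigenvalue $R_g/(n-1)$, and then using integration by parts on a closed manifold together with $f$ being nonconstant (guaranteed by Theorem \ref{teo004} since $\sigma_2(g)\neq 0$) to pin down the sign of $R_g$ — was already carried out inside the proof of Lemma \ref{prop14}. The only new observation required at this stage is the elementary one that on an Einstein manifold nonzero scalar curvature always gives positive $\sigma_2$-curvature, which is precisely what lets us feed a negative-scalar-curvature Einstein hypothesis into the positive-$\sigma_2$-curvature lemma.
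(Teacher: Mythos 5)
Your proof is correct and follows exactly the paper's argument: combine formula (\ref{eq001}) (on an Einstein manifold, $R_g\neq0$ forces $\sigma_2(g)>0$) with Lemma \ref{prop14} to force $R_g>0$, contradicting the hypothesis $R_g<0$. The paper states this contradiction argument almost verbatim, so there is nothing to add.
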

% \begin{proof} The proof follows by contradiction using (\ref{eq001}) and Lemma \ref{prop14}.
% \end{proof}

Now we will show a rigidity result for closed $\sigma_2$-singular Einstein manifold with positive $\sigma_2$-curvature.

\begin{theorem}[Theorem \ref{isometricsphere}] \label{teo005} 
	Let $(M^n,g,f)$ be a closed $\sigma_2$-singular Einstein manifold with positive $\sigma_2$-curvature. Then $(M^n,g)$ is isometric to the round sphere with radius $r=\left(\frac{n(n-1)}{R_g}\right)^{\frac{1}{2}}$ and $f$ is an eigenfunction of the Laplacian associated to the first eigenvalue $\frac{R_g}{n-1}$ on $\mathbb{S}^n(r)$. Hence 
	$\mbox{dim}\ ker\ \Lambda^*_g=n+1$ and $\displaystyle\int_Mf=0$.
\end{theorem}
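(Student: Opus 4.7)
The plan is to combine Lemma \ref{lemma002}, Lemma \ref{prop14}, and the classical theorem of Obata characterizing the round sphere.

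First I would record the PDE for $f$. By Lemma \ref{prop14} we have $R_g>0$, and by Lemma \ref{lemma002} the kernel condition $\Lambda_g^*(f)=0$ forces
\begin{equation*}
\nabla_g^2 f+\frac{R_g}{n(n-1)}f\,g=0.
\end{equation*}
Moreover, $f$ is non-constant: otherwise part (b) of Theorem \ref{teo004} would give $\sigma_2(g)=0$, contradicting the hypothesis $\sigma_2(g)>0$. Setting $c^2=\frac{R_g}{n(n-1)}>0$, the equation reads $\nabla^2 f=-c^2 f g$, so by Obata's theorem $(M^n,g)$ is isometric to the round sphere of radius $r=1/c=\sqrt{n(n-1)/R_g}$, which handles the first conclusion. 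Taking the trace of the Hessian equation yields $\Delta_g f+\frac{R_g}{n-1}f=0$, which identifies $f$ as an eigenfunction for the eigenvalue $\lambda_1=R_g/(n-1)$; since this equals $n/r^2$, it is indeed the first nonzero eigenvalue of the Laplacian on $\mathbb S^n(r)$.

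Next I would pin down $\dim\ker\Lambda_g^*$. The argument above actually shows the inclusion
$$\ker\Lambda_g^*\subset E_{\lambda_1}:=\{u\in C^\infty(M): \Delta_g u=-\tfrac{R_g}{n-1}u\}\oplus\mathbb R\cdot 1,$$
but the constant $1$ is excluded by part (b) of Theorem \ref{teo004}, so $\ker\Lambda_g^*\subset E_{\lambda_1}$. For the reverse inclusion, take any $u\in E_{\lambda_1}$ on the round sphere; the standard fact that $\lambda_1$-eigenfunctions satisfy $\nabla^2 u=-\frac{R_g}{n(n-1)}u\,g$ lets me plug into formula (\ref{eq0002}) of Lemma \ref{lemma002} and check by direct substitution that $\Lambda_g^*(u)=0$ (the three coefficients $-\frac{1}{n(n-1)}+\frac{1}{n-1}-\frac{1}{n}$ cancel). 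Hence $\ker\Lambda_g^*=E_{\lambda_1}$, whose dimension on $\mathbb S^n(r)$ is $n+1$ (spanned by the restrictions of the coordinate functions of $\mathbb R^{n+1}$). Finally, $\int_M f\,dv_g=0$ is automatic since any non-constant eigenfunction of $\Delta_g$ is $L^2$-orthogonal to the constants.

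The only non-routine ingredient is Obata's rigidity theorem; everything else reduces to algebraic manipulation of formulas already established in Section \ref{sec01} and the spectral description of $\mathbb S^n(r)$. The main point to be careful about is ruling out the constant function from $\ker\Lambda_g^*$ via the positivity of $\sigma_2(g)$, which ensures the $n+1$ independent solutions come entirely from the first eigenspace and not from a mix with constants.
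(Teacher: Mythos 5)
Your proof is correct and follows essentially the same route as the paper: both rely on Lemma \ref{prop14} to get $R_g>0$, on Theorem \ref{teo004}(b) to rule out constant $f$, and on Obata-type rigidity to identify $M$ with the round sphere. The only cosmetic difference is that you apply Obata's theorem in its Hessian form directly to equation (\ref{eq006}) of Lemma \ref{lemma002}, whereas the paper first takes the trace to get the eigenvalue equation and then invokes the Lichnerowicz--Obata eigenvalue comparison; the two routes are equivalent. Your treatment of the dimension count (explicitly checking both inclusions, with the coefficient cancellation $-\tfrac{1}{n(n-1)}+\tfrac{1}{n-1}-\tfrac{1}{n}=0$) and of $\int_M f=0$ is if anything slightly more complete than what the paper records.
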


\begin{proof}
	By Lemma \ref{prop14} the scalar curvature $R_g$ is a positive constant and by Theorem \ref{teo004} the function $f$ is not constant. As in the proof of Lemma \ref{prop14} we obtain
	\begin{equation}\label{eq020}
	\Delta_gf+\frac{R_g}{n-1}f=0, 
	\end{equation}
	that is, $f$ is an eigenfunction of the Laplacian associated to the eigenvalue $\frac{R_g}{n-1}$.
	On the other hand, by Lichnerowicz-Obata's Theorem, see \cite{MR1333601}, the first nonzero eigenvalue of the Laplacian satisfies the inequality
	\begin{equation}\label{eq007}
	\lambda_1\geq \frac{R_g}{n-1}.
	\end{equation}
	Moreover, the equality holds in (\ref{eq007}) if and only if $(M^n,g)$ is isometric to the round sphere with radius $r=\left(\frac{n(n-1)}{R_g}\right)^{\frac{1}{2}}.$ 
	
	Therefore, by (\ref{eq020}) the ker $\Lambda^*_g$ can be identified with the eigenspace associated to the first nonzero eigenvalue $\lambda_1 > 0$ of Laplacian in the round sphere with radius $r$. But, it is well known that this space has dimension $n+1$. Hence dim ker $\Lambda^*_g=n+1.$
\end{proof}

Since the complex projective space $\mathbb CP^n$ with the Fubini-Study metric is an Einstein manifold with positive $\sigma_2$-curvature, then we see that the condition on the singularity cannot be dropped. Also, since the hyperbolic space is a $\sigma_2$-singular space then the theorem is false if the manifold is only complete. We also can use the Theorem 2 in \cite{T} and (\ref{eq020}) to prove the Theorem \ref{teo005}.

An immediate consequence is the following

\begin{corollary} \label{cor11}
	Let $(M^n,g)$ be a closed Einstein manifold with positive $\sigma_2$-curva-ture. If  $(M^n,g)$ is not isometric to the round sphere, then $(M^n,g)$ is not $\sigma_2$-singular.
\end{corollary}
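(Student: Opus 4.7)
The statement is the contrapositive of Theorem \ref{teo005}, so the plan is essentially to invoke that theorem and argue by contradiction. Specifically, I would assume for contradiction that $(M^n,g)$ is $\sigma_2$-singular. By Definition \ref{def001}, this means $\ker \Lambda_g^* \neq \{0\}$, so there exists a nontrivial $f \in C^\infty(M)$ with $\Lambda_g^*(f) = 0$. Then the triple $(M^n, g, f)$ is a closed $\sigma_2$-singular Einstein manifold with positive $\sigma_2$-curvature, which is precisely the hypothesis of Theorem \ref{teo005}.

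Applying Theorem \ref{teo005} to this triple gives that $(M^n, g)$ must be isometric to the round sphere of radius $r = (n(n-1)/R_g)^{1/2}$. This directly contradicts the hypothesis that $(M^n, g)$ is not isometric to the round sphere, and the corollary follows.

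There is no real obstacle here: once Theorem \ref{teo005} is established, the corollary is a formal contrapositive restatement. The only subtle point worth mentioning is that the hypotheses of the corollary (closed, Einstein, positive $\sigma_2$-curvature) match the hypotheses of Theorem \ref{teo005} exactly, so no additional regularity or positivity assumption needs to be checked before invoking it. The existence of at least one nontrivial $f$ in the kernel, guaranteed by the singularity assumption, is precisely what is needed to form the triple $(M^n, g, f)$ required as input to the theorem.
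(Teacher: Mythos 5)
Your argument is correct and matches the paper's intent exactly: the paper states this corollary as an ``immediate consequence'' of Theorem \ref{teo005}, which is precisely the contrapositive reading you give. No further comment is needed.
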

% \begin{proof}
% The proof follows directly by the Theorem \ref{teo005}.
% \end{proof}
\subsection{Local Surjectivity of $\sigma_2$}\label{sec02}
In this section we prove a local surjective result to the $\sigma_2$-curvature. To achieve this goal we need of the Splitting Theorem and Generalized Inverse Function Theorem which can be found in \cite{MR0380907}.  The main result in this section reads as follows.

\begin{theorem}[Theorem \ref{teo006}]\label{stability}
	Let $(M^n,g_0)$ be a closed Riemannian manifold not $\sigma_2$-singular. Suppose that
	\begin{enumerate}[(i)]
		\item $g_0$ is an Einstein metric with positive $\sigma_2$-curvature; or
		\item\label{cond003} 
		\begin{equation}\label{equ001}
		\sigma_2(g_0)>\frac{1}{8(n-1)}R_{g_0}^2.
		\end{equation}
		%The scalar and the Ricci curvature satisfy the inequality $|R_{g_0}|>2|Ric_{g_0}|_{g_0}.$
	\end{enumerate}
	Then, there is a neighborhood $U\subset C^{\infty}(M)$ of $\sigma_{2}({g_0})$ such that for any $\psi \in U,$ there is a metric $g$ on $M$ close to $g_0$ with $\sigma_2(g)=\psi.$
\end{theorem}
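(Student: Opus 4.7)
The plan is to follow the Fischer--Marsden scheme in \cite{MR0380907} adapted to the nonlinear map $\sigma_2:\mathcal M\to C^\infty(M)$. The key analytic ingredient is to verify that at $g_0$ the second order operator $\Lambda_{g_0}^*$ is overdetermined elliptic, i.e.\ has injective principal symbol. Granting this, the fourth order self-adjoint operator $\Lambda_{g_0}\Lambda_{g_0}^*:C^\infty(M)\to C^\infty(M)$ is elliptic, and the non $\sigma_2$-singular hypothesis $\ker\Lambda_{g_0}^*=\{0\}$ immediately gives $\ker\Lambda_{g_0}\Lambda_{g_0}^*=\{0\}$; on the closed manifold $M$ this forces $\Lambda_{g_0}\Lambda_{g_0}^*$ to be an isomorphism between the appropriate Sobolev completions. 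Hence $\Lambda_{g_0}^*(\Lambda_{g_0}\Lambda_{g_0}^*)^{-1}$ is a bounded right inverse of $\Lambda_{g_0}$, the Splitting Theorem decomposes a neighborhood of $g_0$ in $\mathcal M$ via $\ker\Lambda_{g_0}\oplus\operatorname{Im}\Lambda_{g_0}^*$, and the Generalized Inverse Function Theorem of \cite{MR0380907} applied to $\sigma_2$ at $g_0$ then produces the neighborhood $U$ and, for each $\psi\in U$, a metric $g$ close to $g_0$ with $\sigma_2(g)=\psi$.

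The entire weight of the argument therefore rests on the symbol analysis. Using the trace identity (\ref{eq003}), the principal symbol of the scalar operator $f\mapsto tr_g\Lambda_{g_0}^*(f)$ at a covector $\xi\in T_x^*M\setminus\{0\}$ is
\[
\sigma\bigl(tr_g\Lambda_{g_0}^*\bigr)(\xi)=\tfrac{n-2}{4}\bigl(|\xi|^2 R_{g_0}(x)-2Ric_{g_0}(\xi,\xi)\bigr),
\]
which equals the trace of the 2-tensor $B(\xi):=\sigma(\Lambda_{g_0}^*)(\xi)\cdot 1$. Consequently $B(\xi)\neq 0$, i.e.\ $\sigma(\Lambda_{g_0}^*)(\xi)$ is injective, as soon as
\begin{equation}\label{eq:symcond}
|\xi|^2 R_{g_0}(x)\neq 2\,Ric_{g_0}(\xi,\xi)\qquad\text{for every }x\in M\text{ and every }\xi\neq 0.
\end{equation}
Under hypothesis (i), $Ric_{g_0}=(R_{g_0}/n)g_0$ and the identity (\ref{eq001}) together with $\sigma_2(g_0)>0$ give $R_{g_0}\neq 0$, and (\ref{eq:symcond}) reduces to $R_{g_0}\neq 2R_{g_0}/n$, valid since $n\geq 3$. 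Under hypothesis (ii), a direct substitution using (\ref{eq002}) shows the inequality $\sigma_2(g_0)>R_{g_0}^2/[8(n-1)]$ is equivalent to $R_{g_0}^2>4|Ric_{g_0}|^2$, so $R_{g_0}$ vanishes nowhere; writing $\lambda_1(x)\leq\cdots\leq\lambda_n(x)$ for the eigenvalues of $Ric_{g_0}$ at $x$, each $\lambda_k$ satisfies $\lambda_k^2\leq|Ric_{g_0}|^2<R_{g_0}^2/4$, so $|\lambda_k|<|R_{g_0}|/2$. This places $R_{g_0}(x)/2$ strictly outside the spectrum $[\lambda_1(x),\lambda_n(x)]$ of $Ric_{g_0}$, hence the Rayleigh quotient $Ric_{g_0}(\xi,\xi)/|\xi|^2$ never equals $R_{g_0}(x)/2$ and (\ref{eq:symcond}) holds.

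The main obstacle of this plan is precisely the symbol dichotomy above; condition (ii) is tailored to it, and together with the Cauchy--Schwarz bound $R_{g_0}^2\leq n|Ric_{g_0}|^2$ it also accounts for the dimensional restriction $n>4$ mentioned after Theorem \ref{teo006}. Once (\ref{eq:symcond}) is known, the remainder is a routine adaptation of Fischer--Marsden's scheme with $(\gamma_g,\gamma_g^*)$ replaced throughout by $(\Lambda_g,\Lambda_g^*)$.
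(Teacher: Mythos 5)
Your proof is correct and follows essentially the same route as the paper: both reduce Theorem~\ref{teo006} to showing that $\Lambda_{g_0}^*$ has injective principal symbol, use the Berger--Ebin splitting together with $\ker\Lambda_{g_0}^*=\{0\}$ to conclude $\Lambda_{g_0}$ is surjective, and then invoke the Generalized Inverse Function Theorem. The only stylistic differences are that you compute the symbol of the scalar operator $tr_g\Lambda_{g_0}^*$ directly from the trace identity~(\ref{eq003}) instead of first writing out the full $S_2(M)$-valued symbol and then tracing, and in case~(\ref{cond003}) you phrase the estimate $|Ric_{g_0}|<|R_{g_0}|/2$ through eigenvalues and a Rayleigh-quotient argument where the paper uses Cauchy--Schwarz on $\langle\xi\otimes\xi,Ric_{g_0}\rangle$; both yield the identical conclusion that $R_{g_0}/2$ avoids the spectrum of $Ric_{g_0}$, hence $\xi=0$.
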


\begin{proof}
	The principal symbol of $\Lambda^*_{g_0}$ is
	\begin{eqnarray*}
		\sigma_{\xi}(\Lambda^*_{g_0})&=&\frac{1}{2}|\xi|^2 Ric_{g_0}+\frac{1}{2}\langle \xi\otimes\xi,Ric_{g_0}\rangle {g_0}-\frac{1}{2}g_0^{ij}(\xi_l\xi_iR_{kj}+\xi_k\xi_iR_{lj})\\
		&-& \frac{n}{4(n-1)}R_{g_0}|\xi|^2g_0+\frac{n}{4(n-1)}R_{g_0} \xi\otimes\xi.
	\end{eqnarray*}
	Taking trace, we get 
	\begin{eqnarray*}
		tr(\sigma_{\xi}(\Lambda^*_{g_0}))=\frac{n-2}{2}\left(-\frac{|\xi|^2}{2}R_{g_0}+\langle \xi\otimes\xi,Ric_{g_0}\rangle_{g_0}\right).
	\end{eqnarray*}
	Thus, $tr(\sigma_{\xi}(\Lambda^*_{g_0}))=0$ implies that 
	\begin{equation}\label{tracexi}
	\frac{|\xi|^2}{2}R_{g_0}=\langle \xi\otimes\xi,Ric_{g_0}\rangle_{g_0}.
	\end{equation} 
	So, we have two cases.
	
	\noindent{\bf Case 1:} If $(M^n,g)$ is Einstein with $\sigma_2(g_0)\neq 0$, then by (\ref{eq001}) we get that $R_{g_0}\neq 0.$ Using the equation (\ref{tracexi}) and $Ric_{{g_{0}}}=\frac{R_{g_{0}}}{n}g$ we get that $\xi\equiv0.$

	\noindent{\bf Case 2:} If $(M^{n},g)$ is not Einstein, then by (\ref{tracexi}) we get
	$$\left|\frac{|\xi|^2}{2}R_{g_0}\right|\leq|\xi\otimes\xi|_{g_0}|Ric_{g_0}|_{g_0}=|\xi|^2_{g_0}|Ric_{g_0}|_{g_0},$$
	which is equivalent to
	$$|\xi|^2_{g_0}\left(\frac{1}{2}|R_{g_0}|-|Ric_{g_0}|_{g_0}\right)\leq 0.$$
	The inequality (\ref{equ001}) implies that $\xi\equiv0$.
	
	Therefore, in any case $\Lambda^*_{g_0}$ has an injective principal symbol. By the {\it Splitting Theorem}, see Corollary 4.2 in \cite{MR0266084}, we obtain that
	\begin{eqnarray*}
		C^{\infty}(M)= Im \Lambda_{g_0}\oplus\ker\Lambda^*_{g_0}.
	\end{eqnarray*}
	Since we assume that $(M,g_0)$ is not $\sigma_2$-singular, then $\ker\Lambda^*_{g_0}=\{0\}$, which implies that $\Lambda_{g_0}$ is surjective.
	
	Therefore, applying the Generalized Implicit Function Theorem, $\sigma_2$ maps a neighborhood of $g_0$ to a neighborhood of $\sigma_2 ({g_0})$ in $C^{\infty}(M).$
\end{proof}

Note that if $(M^n,g)$ is an Einstein manifold with dimension $n>4$, then (\ref{equ001}) holds. Since the round sphere $\mathbb{S}^n$ is Einstein, then a metric in the unit sphere close to the round metric satisfies the condition (\ref{equ001}) and is not $\sigma_{2}$-singular. Also we notice here that for any metric we have $R_g^2\leq n|Ric_g|^2$, thus if the inequality (\ref{equ001}) is satisfied then $n>4$.

As an immediate consequence of the Corollary \ref{cor11} and the Theorem \ref{stability} we obtain the next corollary.

\begin{corollary}
	Let $(M^n,{g_0})$ be a closed Einstein manifold with positive $\sigma_2$-curvature. Assume that $(M^n,{g_0})$ is not isometric to the round sphere. Then, there is a neighborhood $U\subset C^{\infty}(M)$ of $\sigma_{2}({g_0})$ such that for any $\psi \in U,$ there is a metric $g$ on $M$ closed to $g_0$ with $\sigma_2(g)=\psi.$
\end{corollary}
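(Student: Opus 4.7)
The plan is a direct chaining of Corollary \ref{cor11} and Theorem \ref{stability}, so what I really need to do is verify that the hypotheses of both results are met at $g_0$ and assemble the two conclusions.

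First I would invoke Corollary \ref{cor11}. By hypothesis $(M^n,g_0)$ is a closed Einstein manifold with positive $\sigma_2$-curvature which is not isometric to the round sphere; this is precisely the setting ruled in by that corollary (whose content is really the contrapositive of Theorem \ref{teo005}: the round sphere is the only closed $\sigma_2$-singular Einstein manifold with positive $\sigma_2$-curvature). Hence $(M^n,g_0)$ is not $\sigma_2$-singular, i.e.\ $\ker \Lambda^*_{g_0}=\{0\}$.

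Next I would feed $g_0$ into Theorem \ref{stability} under its hypothesis (i): a closed Riemannian manifold which is not $\sigma_2$-singular and is Einstein with positive $\sigma_2$-curvature. The first part of (i) was just verified, and the Einstein and positivity conditions are assumed from the outset. The conclusion of Theorem \ref{stability} is exactly what we are after: there is a $C^\infty$-neighborhood $U$ of $\sigma_2(g_0)$ such that each $\psi\in U$ is realized as $\sigma_2(g)$ for some metric $g$ close to $g_0$.

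There is no genuine obstacle here; the work was already done upstream. The injectivity of the principal symbol of $\Lambda^*_{g_0}$ (for Case 1, Einstein with $\sigma_2(g_0)\neq 0$), the Berger--Ebin splitting $C^\infty(M)=\mathrm{Im}\,\Lambda_{g_0}\oplus \ker \Lambda^*_{g_0}$, and the generalized implicit function theorem have all been invoked inside the proof of Theorem \ref{stability}; and the classification of $\sigma_2$-singular Einstein manifolds with positive $\sigma_2$-curvature was carried out in Theorem \ref{teo005}. All that remains in this corollary is to observe that the two assumptions assumed here line up verbatim with the hypotheses of those two results, which they do.
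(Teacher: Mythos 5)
Your proposal is correct and is exactly the paper's own argument: the paper states this corollary as ``an immediate consequence of Corollary~\ref{cor11} and Theorem~\ref{stability},'' which is precisely the chaining you carry out, with the hypotheses checked as you describe.
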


\section{Flat Metrics and the $\sigma_2$-curvature}\label{sec05}

The main goal of this section is to prove the Theorems \ref{teo003} and \ref{teo008}.

Let $(M,g_0)$ be a closed Riemannian manifold. For each $\varepsilon>0$ define the functional
\begin{equation}\label{eq022}
\mathcal{F}_\varepsilon(g)=\int_M\sigma_2(g)dv_{g_0}-\left(\frac{1}{8n(n-1)}+\varepsilon\right)\int_MR_g^2dv_{g_0},
\end{equation}
which is defined in the space $\mathcal M$ of all Riemannian metric in $M$. Note that the volume element is with respect to the fixed metric $g_0$. Next we find the first and second variation of the functional (\ref{eq022}) under a special condition.

\begin{lemma}\label{lema5}
	Let $(M,g_0)$ be a closed flat Riemannian manifold. Let $h$ be a symmetric 2-tensor with $div(h)=0$. Then the first variation of $\mathcal{F}_\varepsilon$ at $g_0$ is identically zero and the second variation is given by
	$$
	D^2\mathcal{F}_\varepsilon(g_0)(h,h)=-\displaystyle\int_M\left(2\varepsilon(\Delta tr(h))^2+\frac{1}{4}\left|\Delta \mathring h\right|^2\right)dv_{g_0},
	$$
	where $\mathring{h}=h-\frac{tr(h)}{n}g$ is the traceless part of $h$.
	
\end{lemma}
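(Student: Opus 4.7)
\medskip

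\noindent\textbf{Proof plan.} The plan is to compute $\tfrac{d}{dt}\mathcal F_\varepsilon(g(t))|_{t=0}$ and $\tfrac{d^2}{dt^2}\mathcal F_\varepsilon(g(t))|_{t=0}$ along a path $g(t)$ with $g(0)=g_0$ and $g'(0)=h$, using that $g_0$ is flat and $\mathrm{div}(h)=0$. Flatness gives $R_{g_0}=0$, $Ric_{g_0}=0$, $\sigma_2(g_0)=0$, and $\mathring R(\cdot)=0$; the divergence-free condition gives $\delta h=0$, so $\delta^*\delta h=0$ and $\delta^2 h=0$. Under these, the key linearizations from \eqref{eq00-1} and \eqref{eq000} reduce to
\[
R'_0 := \tfrac{d}{dt}R_{g(t)}\big|_{t=0} = -\Delta_{g_0}\mathrm{tr}_{g_0}h,\qquad
Ric'_0:=\tfrac{d}{dt}Ric_{g(t)}\big|_{t=0}=-\tfrac12\bigl(\Delta h+\nabla^2 \mathrm{tr}_{g_0}h\bigr).
\]

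\smallskip

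\noindent\textbf{First variation.} By Proposition \ref{prop001}, $\Lambda_{g_0}(h)$ is a linear combination of terms each carrying a factor $Ric_{g_0}$ or $R_{g_0}$, hence $\Lambda_{g_0}(h)=0$. Similarly $D(R^2)(g_0)(h)=2R_{g_0}\gamma_{g_0}(h)=0$. Since the volume element is frozen at $g_0$, differentiating \eqref{eq022} under the integral gives $D\mathcal F_\varepsilon(g_0)(h)=0$.

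\smallskip

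\noindent\textbf{Second variation.} From \eqref{eq002}, $\tfrac{d^2}{dt^2}\sigma_2(g(t))=-\tfrac12\tfrac{d^2}{dt^2}|Ric|^2+\tfrac{n}{8(n-1)}\tfrac{d^2}{dt^2}R^2$. Applying Lemma \ref{lema001} with $Ric_{g_0}=0$ and using $\tfrac{d^2}{dt^2}R^2|_0 = 2(R'_0)^2+2R_{g_0}R''_0 = 2(R'_0)^2$, evaluation at $t=0$ gives
\[
D^2\mathcal F_\varepsilon(g_0)(h,h)=\int_M\left[-|Ric'_0|^2+\left(\tfrac{n}{4(n-1)}-\tfrac{1}{4n(n-1)}-2\varepsilon\right)(R'_0)^2\right]dv_{g_0},
\]
where the middle constant simplifies to $\tfrac{n+1}{4n}-2\varepsilon$. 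Substitute $R'_0$ and $Ric'_0$ from above.

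\smallskip

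\noindent\textbf{Simplification via integration by parts.} Split $h=\mathring h+\tfrac{\mathrm{tr}(h)}{n}g_0$. Since $\mathrm{tr}\,\Delta\mathring h=\Delta\,\mathrm{tr}\,\mathring h=0$, expanding yields $|\Delta h|^2=|\Delta\mathring h|^2+\tfrac{1}{n}(\Delta\mathrm{tr}\,h)^2$. On a flat manifold, derivatives commute on scalars, so Bochner gives $\int |\nabla^2\mathrm{tr}\,h|^2=\int(\Delta\mathrm{tr}\,h)^2$. The cross term
\[
\int_M\langle \Delta h,\nabla^2\mathrm{tr}\,h\rangle dv_{g_0}
\]
is handled by moving $\Delta$ across and then $\nabla^i\nabla^j$ onto $h_{ij}$; since $\nabla$'s commute on the flat background, this produces $\int_M (\Delta\mathrm{tr}\,h)\,\nabla^i\nabla^j h_{ij}\,dv_{g_0}=0$ using $\mathrm{div}(h)=0$. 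Hence
\[
\int_M |Ric'_0|^2\,dv_{g_0}=\tfrac14\int_M|\Delta\mathring h|^2\,dv_{g_0}+\tfrac{n+1}{4n}\int_M(\Delta\mathrm{tr}\,h)^2\,dv_{g_0}.
\]
Plugging back, the $(\Delta\mathrm{tr}\,h)^2$ coefficients combine to $-2\varepsilon$, producing exactly
\[
D^2\mathcal F_\varepsilon(g_0)(h,h)=-\int_M\Bigl(2\varepsilon(\Delta\mathrm{tr}\,h)^2+\tfrac14|\Delta\mathring h|^2\Bigr)dv_{g_0}.
\]

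\smallskip

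\noindent\textbf{Main obstacle.} The only nontrivial step is the bookkeeping in the last simplification, particularly verifying that the cross term vanishes and that the coefficient of $(\Delta\mathrm{tr}\,h)^2$ collapses precisely to $2\varepsilon$. All other ingredients (vanishing first variation, reduction of Lichnerowicz and evolution formulas at a flat metric) are immediate from the identities already recorded in Section \ref{sec01}.
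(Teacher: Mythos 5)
Your proof is correct and follows essentially the same route as the paper's: vanishing first variation from $Ric_{g_0}=0$ and $R_{g_0}=0$, then reduction of the second variation of $\int|Ric|^2$ and $\int R^2$ via the evolution formulas and Lemma~\ref{lema001}, the observation that the cross term $\int\langle\Delta h,\nabla^2\mathrm{tr}\,h\rangle$ vanishes when $\mathrm{div}(h)=0$ and $g_0$ is flat, the Bochner identity $\int|\nabla^2\mathrm{tr}\,h|^2=\int(\Delta\mathrm{tr}\,h)^2$, and the trace decomposition~(\ref{eq023}). Your write-up is somewhat more explicit about the cancellation of the cross term and about the coefficient bookkeeping for $(\Delta\mathrm{tr}\,h)^2$, which the paper compresses (in fact the paper has a sign typo in the intermediate coefficient, $\frac{n+1}{8n}+\varepsilon$ rather than $\frac{n+1}{8n}-\varepsilon$, though its final formula is correct and matches yours).
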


\begin{proof} The first variation of $\mathcal F_\varepsilon$ is identically zero because of its definition  (\ref{eq022}) and the metric is flat.
	
	Now consider $g(t)=g_0+th$ for $t$ small enough. Note that by (\ref{eq00-1}) and (\ref{eq000}) we get
	$$
	\frac{\partial}{\partial t}Ric=-\frac{1}{2}\Delta h_{}-\frac{1}{2}\nabla^2tr(h)
	$$
	and
	\begin{equation}\label{eq014}
	\frac{\partial }{\partial t}R_g=-\Delta tr(h).
	\end{equation}

	Next, by Lemma \ref{lema001} we get
	$$
	\displaystyle\frac{\partial^2}{\partial t^2}\left|Ric_{g(t)}\right|^2 =2\left| \frac{\partial }{\partial t}Ric_{g}\right|_{g_0}^2.
	$$

	Using that the metric is flat, we get that $\Delta div =div\Delta $ and $div\nabla^2=\Delta \nabla$. Thus, by (\ref{eq00-1}), (\ref{eq028}) and the fact that $div(h)=0$ we obtain that
	$$\begin{array}{rl}
	\displaystyle\int_M\displaystyle\frac{\partial^2}{\partial t^2}\left|Ric_{g(t)}\right|^2dv_{g_0} & = \displaystyle\frac{1}{2}\int_M\left|\Delta h+\nabla^2tr(h)\right|^2dv_g\\
	& = \displaystyle\frac{1}{2}\int_M\left(\left|\Delta h\right|^2+2\langle \Delta h,\nabla^2tr(h)\rangle+\left|\nabla^2tr(h)\right|^2\right)dv_g\\
	& = \displaystyle\frac{1}{2}\int_M\left(\left|\Delta h\right|^2-\langle div\nabla^2tr(h),\nabla tr(h)\rangle\right)dv_g\\
	& = \displaystyle\frac{1}{2}\int_M\left(\left|\Delta h\right|^2+(\Delta tr(h))^2\right)dv_g.
	\end{array}$$
	If $\mathring h=h-\frac{tr(h)}{n}g$ is the traceless part of $h$, then
	\begin{equation}\label{eq023}
	|\Delta\mathring h|^2=|\Delta h|^2-\frac{(\Delta tr(h))^2}{n}.
	\end{equation}
	Thus
	\begin{equation}\label{eq017}
	\displaystyle\int_M\displaystyle\frac{\partial^2}{\partial t^2}\left|Ric_{g(t)}\right|^2dv_{g_0}=\frac{1}{2}\int_M\left(\left|\Delta \mathring h\right|^2+\frac{n+1}{n}(\Delta tr(h))^2\right)dv_g.
	\end{equation}
	
	Using (\ref{eq014}) we get
	\begin{equation}\label{eq018}
	\frac{\partial^2}{\partial t^2}R_{g(t)}^2=2\left(\Delta tr(h)\right)^2.
	\end{equation}
	
	Finally, by (\ref{eq017}) and (\ref{eq018}) at $t=0$ we have
	$$\begin{array}{rcl}
	\displaystyle\frac{\partial^2 }{\partial t^2}\mathcal F_\varepsilon(g(t)) & = & \displaystyle\int_M\left(\left(\frac{n+1}{8n}+\varepsilon\right)\frac{\partial^2 }{\partial t^2}R_{g}^2-\frac{1}{2}\frac{\partial^2 }{\partial t^2}|Ric_{g}|^2\right)dv_{g_0}\\
	%& = & \displaystyle\int_M\left( \displaystyle 2a\left(\Delta tr(h)\right)^2-\frac{1}{2}\left|\Delta \mathring h\right|^2-\frac{n+1}{2n}(\Delta tr(h))^2\right)dv_{g_0}\\
	& = & -\displaystyle\int_M\left(2\varepsilon(\Delta tr(h))^2+\frac{1}{4}\left|\Delta \mathring h\right|^2\right)dv_{g_0}.
	\end{array}$$
\end{proof}

In the next result we need the following theorem (See \cite{MR0380907}, \cite{MR3529121} and references contained therein).

\begin{theorem}\label{teo002}
	Let $(M,g_0)$ be a Riemannian manifold. For $ p > n$, let $g$ be  a Riemannian metric on $M$ such that $\|g-g_0\|_{W^{2,p}(M,g_0)}$ is sufficiently small.
	Then there exists a diffeomorphism $\varphi$ of $M$ such that $h := \varphi^*g -g_0$ satisfies that $div_{g_0}(h) = 0$ and 
	$$\|h\|_{W^{2,p}(M,g_0)}\leq c\|g - g_0\|_{W^{2,p}(M,g_0)},$$
	where $c$ is a positive constant which only depends on $(M, g_0)$.
\end{theorem}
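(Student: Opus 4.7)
The statement is a Sobolev version of the Ebin slice theorem; I would prove it by combining the Berger--Ebin decomposition of symmetric 2-tensors with the implicit function theorem in Banach spaces. First I would parametrize a neighborhood of the identity in the group of $W^{3,p}$-diffeomorphisms by vector fields via the exponential map of $g_0$: $X \mapsto \varphi_X$ with $\varphi_X(x) = \exp^{g_0}_x X(x)$. The hypothesis $p > n$ guarantees, via Sobolev embedding $W^{3,p}\hookrightarrow C^1$, that $\varphi_X$ is a genuine $C^1$-diffeomorphism for small $\|X\|_{W^{3,p}}$, and that the pullback $g\mapsto \varphi_X^* g$ is continuous $W^{2,p} \to W^{2,p}$.

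Next I would consider the map
$$F(\omega, g) := \operatorname{div}_{g_0}\bigl(\varphi_{\omega^\sharp}^{\,*} g\bigr),$$
defined on a neighborhood of $(0, g_0)$ in $W^{3,p}(T^*M) \times (g_0 + W^{2,p}(S_2(M)))$ with values in $W^{1,p}(T^*M)$. Since $F(0, g_0) = 0$, I would compute the partial derivative in $\omega$ at the base point and identify it, up to a constant, with the operator
$$P\,\nu := \operatorname{div}_{g_0}(\delta^*_{g_0}\nu).$$
The operator $P$ is elliptic and formally self-adjoint on 1-forms; its kernel is the finite-dimensional space of Killing 1-forms of $g_0$, and by Fredholm theory its image is the $L^2$-orthogonal complement of that kernel. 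This is the analytic engine behind the Berger--Ebin splitting $S_2(M) = \operatorname{Im}(\delta^*_{g_0}) \oplus \ker(\operatorname{div}_{g_0})$, which I would invoke directly.

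Once these pieces are in place I would apply the implicit function theorem. Restricting $\omega$ to the $L^2$-orthogonal complement of the Killing 1-forms, $D_\omega F(0, g_0)$ becomes a topological isomorphism onto $\operatorname{Im}(P) \subset W^{1,p}(T^*M)$. Crucially, the full image of $F$ already lies in $\operatorname{Im}(P)$: for any symmetric 2-tensor $k$ and any Killing 1-form $\xi$ one has
$$\langle \operatorname{div}_{g_0} k,\; \xi\rangle_{L^2} \;=\; -\bigl\langle k,\; \delta^*_{g_0}\xi\bigr\rangle_{L^2} \;=\; 0.$$
The IFT then produces a $C^1$-map $g \mapsto \omega(g)$ with $\omega(g_0) = 0$, $F(\omega(g), g) = 0$, and a Lipschitz bound $\|\omega(g)\|_{W^{3,p}} \leq c\,\|g - g_0\|_{W^{2,p}}$. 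Setting $\varphi := \varphi_{\omega(g)^\sharp}$ and $h := \varphi^* g - g_0$ yields $\operatorname{div}_{g_0}(h) = 0$; the desired norm estimate then follows from continuity of the pullback together with the Lipschitz bound furnished by the IFT.

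The principal obstacle is the possible presence of a nontrivial Killing algebra for $g_0$ (for instance, on a flat torus, which is exactly the setting in which Theorem \ref{teo003} will apply this result), since it prevents $D_\omega F(0,g_0)$ from being surjective onto all of $W^{1,p}(T^*M)$ and blocks a naive IFT argument. Resolving this requires the Bianchi/Killing duality observation above, which shows that the range of $F$ itself automatically avoids the obstruction; one then restricts the domain of $\omega$ to a complement of the Killing 1-forms to recover invertibility of the linearization. The remaining work is the routine but essential bookkeeping of regularities $W^{3,p} \to W^{2,p} \to W^{1,p}$ ensuring that pullback and composition remain continuous in the chosen Sobolev scale.
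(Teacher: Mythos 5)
The paper does not prove Theorem \ref{teo002}; it simply cites Fischer--Marsden \cite{MR0380907} and Lin--Yuan \cite{MR3529121}, which in turn trace it back to the Ebin slice theorem and the Berger--Ebin decomposition. Your outline correctly identifies the standard strategy behind that result, and your treatment of the two genuinely conceptual points is right: the linearization of $X\mapsto\operatorname{div}_{g_0}(\varphi_X^*g_0)$ at the identity is indeed $2\,\operatorname{div}_{g_0}\circ\,\delta_{g_0}^*$, its kernel is the Killing 1-forms, and the observation that $\operatorname{div}_{g_0}k$ is automatically $L^2$-orthogonal to Killing forms (since $\langle\operatorname{div}_{g_0}k,\xi\rangle_{L^2}=-\langle k,\delta^*_{g_0}\xi\rangle_{L^2}=0$) is exactly what is needed to kill the Fredholm obstruction and recover a well-posed linearized problem on the orthogonal complement.

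There is, however, a genuine gap at the invocation of the implicit function theorem, and you yourself flag the danger point without resolving it: you observe that the pullback $g\mapsto\varphi_X^*g$ is \emph{continuous} $W^{2,p}\to W^{2,p}$, but then assert that the IFT ``produces a $C^1$-map $g\mapsto\omega(g)$.'' The IFT needs the map $F(\omega,g)=\operatorname{div}_{g_0}(\varphi_{\omega^\sharp}^*g)$ to be $C^1$ with values in $W^{1,p}(T^*M)$, and this fails on fixed Sobolev scales. The derivative of $F$ in $\omega$ at a point $(0,g)$ with $g\in g_0+W^{2,p}$ is $\nu\mapsto\operatorname{div}_{g_0}(\mathcal L_{\nu^\sharp}g)$. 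In coordinates, $(\mathcal L_Xg)_{ij}$ contains the term $X^k\partial_k g_{ij}$; with $X\in W^{3,p}\subset C^2$ and $g$ only in $W^{2,p}$, this term is merely $W^{1,p}$, so $\mathcal L_{\nu^\sharp}g\in W^{1,p}$ and $\operatorname{div}_{g_0}(\mathcal L_{\nu^\sharp}g)\in L^p$ — it has already dropped out of the target $W^{1,p}$. Only at the basepoint $g=g_0$, where $g_0$ is smooth, does the derivative land in $W^{1,p}$. So $D_\omega F$ is not continuous as a map into $\mathcal L(W^{3,p},W^{1,p})$ near $(0,g_0)$, the Banach IFT does not apply to $F$ as you have set it up, and passing to $L^p$ as the target does not help because $D_\omega F(0,g_0)$ then fails to have closed complemented range. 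This loss of one derivative is precisely the obstruction that Ebin's slice theorem is built to circumvent (via the ILH/Palais framework, or in later treatments via a bootstrap and elliptic regularity that exploits the smoothness of $g_0$); it is the real content of the cited result, not ``routine bookkeeping.'' Your proof should either carry out that argument or, as the paper does, cite the slice theorem as a black box.
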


Now we are ready to prove the Theorem \ref{teo003}.

%\begin{theorem}[Theorem \ref{teo003}] \label{teo007}
%Let $(M^n,g_0)$ be a closed flat Riemannian manifold with dimension $n\geq 3$. Let $g$ be a metric on $M$ with

%\begin{equation}\label{eq024}
%|Ric_g|^2\leq \left(\frac{n+1}{4n}-\varepsilon\right)R_g^2,
%\end{equation} 
%for some $\varepsilon>0$. If $||g-g_0||_{C^2(M,g_0)}$ is sufficiently small, then $g$ is also flat.
%\end{theorem}

\begin{proof}[Proof of the Theorem \ref{teo003}]
	By Theorem \ref{teo002} there exists a diffeomorphism $\varphi$ of $M$ such that if we define $h:=\varphi^*g-g_0$ then $div_{g_0}(h)=0$ and
	$$\|h\|_{C^2(M,g_0)}\leq c ||g-g_0||_{C^2(M,g_0)}.$$
	where the positive constant $c$ depends only on $(M,g_0).$
	
	Expanding $\mathcal{F}_a(\varphi^*g)=\mathcal F_a(g_0+h)$ at $g_0$ and using Lemma \ref{lema5} we obtain
	\begin{equation}\label{eq025}
	\begin{array}{rcl}
	\mathcal{F}_\varepsilon(\varphi^*g)&=&\displaystyle\mathcal{F}_\varepsilon(g_0)+D\mathcal{F}_\varepsilon(g_0)(h)+\frac{1}{2}D^2\mathcal{F}_\varepsilon(g_0)(h,h)+E_3\nonumber\\
	&=&-\displaystyle\displaystyle\int_M\left(2\varepsilon(\Delta tr(h))^2+\frac{1}{4}\left|\Delta \mathring h\right|^2\right)dv_{g_0}+E_3,
	\end{array}
	\end{equation}
	where $|E_3|\leq\displaystyle C\int_M|h||\nabla^2 h|^2dv_{g_0}$ for some constant $C=C(n,M,g_0)>0.$
	Besides, by hypothesis we have
	$$\mathcal{F}_\varepsilon(\varphi^*g)> 0,$$
	for $\varepsilon>0$ small enough.
	
	Now choose $\varepsilon_n>0$ such that $\varepsilon_n<\min\{2n\varepsilon,1/4\}$. Thus using (\ref{eq023}) and (\ref{eq025}) we get
	$$\begin{array}{rcl}
	\displaystyle\varepsilon_n\int_M|\Delta h|^2dv_{g_0}& \leq &\displaystyle\left(2\varepsilon-\frac{\varepsilon_n}{n}\right)\int_M(\Delta tr(h))^2dv_{g_0}+\left( \frac{1}{4}-\varepsilon_n\right)\int_M|\Delta \mathring h|^2dv_{g_0}\\
	&&\displaystyle+\varepsilon_n\int_M|\Delta h|^2dv_{g_0}\\
	&=&2\displaystyle \varepsilon\int_M(\Delta tr(h))^2dv_{g_0} +\frac{1}{4}\int_M|\Delta \mathring h|^2dv_{g_0}\\
	&=&\displaystyle-\mathcal{F}(\varphi^*g)+E_3\leq |E_3|\\
	&\leq& C_0\displaystyle\int_M|h||\nabla^2 h|^2dv_{g_0}.
	
	\end{array}$$
	Suppose $g$ is a Riemannian metric in $M$ such that $||g-g_0||_{C^2(M,g_0)}<\frac{\varepsilon_n}{2cC_0}$. The Theorem \ref{teo002} implies that for $\varepsilon_n>0$ small enough, there exists a diffeomorphism $\varphi$ of $M$ such that taking $h:=\varphi^*g-g_0$ we have $div_{g_0}(h)=0$ and
	$$||h||_{C^0(M,g_0)}\leq||h||_{C^2(M,g_0)}\leq c||g-g_0||_{C^2(M,g_0)}<\frac{\varepsilon_n}{2C_0}.$$
	Therefore
	$$\begin{array}{rcl}
	\displaystyle\varepsilon_n\int_M|\nabla^2 h|^2dv_{g_0} & = & \displaystyle\varepsilon_n\int_M|\Delta h|^2dv_{g_0}\leq C_0\displaystyle\int_M|h||\nabla^2 h|^2dv_{g_0}\\
	& \leq & \displaystyle\frac{\varepsilon_n}{2}\int_M|\nabla^2 h|^2dv_{g_0},
	\end{array}$$
	which implies that $\nabla^2h=0$ on $M.$
	On the other hand, 
	$$\int_M|\nabla h|^2dv_{g_0}=-\int_Mh\Delta h dv_{g_0}=0$$
	and this implies that $\nabla h=0,$ that is, $h$ is parallel with respect to $g_0$.

	Since $g_0$ is flat, then given $p\in M$ we can find local coordinates at $p$ such that $(g_{0})_{ij}=\delta_{ij}$ and $\partial_k(g_{0})_{ij}=0,$ for all $i ,j, k\in\{1,\cdots,n\}$, in some neighbohood $U_p.$ In these coordinates the Christoffel symbols of $\varphi^*g=g_0+h$ are
	$$\displaystyle \Gamma^k_{ij}(\varphi^*g)= \displaystyle\frac{1}{2}(\varphi^*g)^{kl}\left(\nabla_ih_{jl}+\nabla_jh_{il}-\nabla_lh_{ij}\right)=0.
	$$
	
	Therefore, the Riemann curvature tensor is identically zero.
\end{proof}

As a consequence of the previous result we get the following corollary.
%\begin{corollary}\label{cor001}
%Let $(M^n,g_0)$ be a closed flat Riemannian manifold with dimension $n\geq 3$. Let $g$ be a metric on $M$ with
%\begin{equation}\label{eq029}
%\sigma_2(g)\geq\left(\frac{1}{8n(n-1)}+\varepsilon\right)R_g^2,
%\end{equation}
%for some $\varepsilon>0$. If $||g-g_0||_{C^2(M,g_0)}$ is sufficiently small, then $g$ is also flat.
%\end{corollary}

%This corollary is obtained directly from Theorem \ref{teo007}, just observing that the conditions (\ref{eq024}) and (\ref{eq029} are equivalent.

\begin{corollary} Let $U\subset \mathbb{R}^n$ be a bounded open set. Let $\delta$ be the canonical metric on $\mathbb{R}^n$. Let $g$ be a metric on $\mathbb R^n$ such that
	\begin{enumerate}
		\item[(i)] $g=\delta$ in $\mathbb R^n\backslash U$;
		\item[(ii)] $||g-\bar{g}||_{C^2(\mathbb{R}^n,\bar{g})}$ is sufficiently small;
		\item[(iii)] $\displaystyle\int_M\sigma_2(g)dv_g>\displaystyle\frac{1}{8n(n-1)}\int_MR_g^2dv_g$.
	\end{enumerate}
	Then $g$ is a  flat metric.
\end{corollary}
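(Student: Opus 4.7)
The plan is to reduce this corollary to Theorem \ref{teo003} by compactifying the problem to a flat torus. The key geometric observation is that $g$ coincides with the Euclidean metric $\delta$ on all of $\mathbb{R}^n\setminus U$; in particular, choosing $L>\sup_{x\in U}|x|$ and setting $Q=(-L,L)^n$, we have $g=\delta$ on a full open neighbourhood of $\partial Q$. Identifying opposite faces of $\overline{Q}$ then produces a closed flat torus $(T^n_L,g_0)$ on which both $\delta$ and $g$ descend smoothly: the first to the canonical flat metric $g_0$, and the second to a well-defined $C^2$ metric $\tilde g$ whose difference from $g_0$ is supported in the image of $U$.

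Next I would verify that the hypotheses of Theorem \ref{teo003} transfer. Because $\tilde g-g_0$ is supported away from the seam, the quotient projection is a local isometry between $(Q,\delta)$ and a chart of $(T^n_L,g_0)$ and preserves derivatives, so
$$\|\tilde g-g_0\|_{C^2(T^n_L,g_0)}=\|g-\delta\|_{C^2(\mathbb{R}^n,\delta)},$$
and hypothesis (ii) provides the smallness needed to invoke Theorem \ref{teo003}. Since $g=\delta$ outside $U$ and $\delta$ is flat, both $\sigma_2(g)$ and $R_g$ vanish on $\mathbb{R}^n\setminus U$, hence
$$\int_{T^n_L}\sigma_2(\tilde g)\,dv_{\tilde g}=\int_U\sigma_2(g)\,dv_g \quad\text{and}\quad \int_{T^n_L}R_{\tilde g}^2\,dv_{\tilde g}=\int_U R_g^2\,dv_g,$$
so the strict inequality in (iii) is inherited by $\tilde g$ on $T^n_L$.

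With both hypotheses now verified on a closed flat manifold, Theorem \ref{teo003} applied to $(T^n_L,g_0)$ and $\tilde g$ yields that $\tilde g$ is flat on $T^n_L$. Lifting to the universal cover $\mathbb{R}^n\to T^n_L$ then forces $g$ itself to be flat on $\mathbb{R}^n$, completing the argument.

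The only mildly delicate point is that the smallness threshold built into Theorem \ref{teo003} depends on the background flat manifold, which here is $(T^n_L,g_0)$ rather than $(\mathbb{R}^n,\delta)$. However, since $L$ can be fixed once and for all as soon as $U$ is given, this amounts to adjusting the quantitative meaning of ``sufficiently small'' in hypothesis (ii) in terms of this chosen $L$, and is not a real obstacle. Aside from this bookkeeping, every step is routine translation between the non-compact and compact settings.
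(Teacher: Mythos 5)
Your argument is correct and follows the same strategy as the paper: enclose $U$ in a cube (the paper uses a rectangle), identify opposite faces to obtain a flat torus carrying the pushed-forward metric, check that hypotheses (ii) and (iii) transfer, and invoke Theorem \ref{teo003}. Your write-up is actually more careful than the paper's, in particular in noting that the smallness threshold of Theorem \ref{teo003} depends on the chosen torus, hence on $U$, and in explicitly verifying that $\sigma_2(g)$ and $R_g$ vanish outside $U$ so that the integral inequality descends to the quotient.
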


\begin{proof}
	Since $U$ is a bounded open set, then we can find a closed retangle $R\subset\mathbb R^n$ such that $U\subset R$. Thus $g=\delta$ in $\mathbb R^n\backslash R$. Identifying the boundary of $R$ properly we obtain the torus $\mathbb T^n$ with one metric satisfying $(iii)$ and  $C^2$-close to the flat metric. By Theorem \ref{teo003} we have that the metric $g$ is flat. 
\end{proof}

Now we will show that in the 3-dimensional torus $\mathbb T^3$ does not exist a metric with constant scalar curvature and nonnegative $\sigma_2$-curvature, unless it is flat. 

%\begin{proposition} [Theorem \ref{teo008}]\label{prop002}
%The $3$-dimensional torus $\mathbb T^3$ does not admit a metric with constant scalar curvature and non negative $\sigma_2$-curvature unless it is flat.
%\end{proposition}

\begin{proof}[Proof of the Theorem \ref{teo008}] Suppose that $g$ is a metric with constant scalar curvature and nonnegative $\sigma_2$-curvature. Then by Theorem 5.2 in \cite{MR541332} we obtain that the scalar curvature has to be non positive and is zero if the metric is flat. 
	
	Suppose, without loss of generality, that the constant is $-1$. Then by (\ref{eq002}) we have 
	\begin{eqnarray*}
		\sigma_2(g)=-\frac{1}{2}|Ric_g|^2_g+\frac{3}{16}\geq 0.
	\end{eqnarray*}
	This implies that $|Ric_g|^2_g\leq 3/8.$
	
	Let $p\in M$ be a fixed point. Choose an orthonormal basis $\{e_1,e_2,e_3\}$ for $T_pM$ such that the Ricci tensor at $p$ is diagonal. Let $\{\lambda_1,\lambda_2,\lambda_3\}$ be the eigenvalues of $Ric_g(p)$. Then we have
	$$\lambda_1+\lambda_2+\lambda_3=-1$$
	and 
	$$\lambda_1^2+\lambda_2^2+\lambda_3^2\leq \frac{3}{8}.$$
	
	Thus, for $i\neq j,$ we have
	\begin{eqnarray*}
		0&\geq& \lambda_1^2+\lambda_2^2+\lambda_3^2- \frac{3}{8}\\
		&= & \lambda_i^2+\lambda_j^2+(1+ \lambda_i+\lambda_j)^2-\frac{3}{8}\\
		&=&(\lambda_i+\lambda_j)^2+2(\lambda_i+\lambda_j) +\lambda_i^2+\lambda_j^2+\frac{5}{8}\\
		&\geq&\frac{3}{2}(\lambda_i+\lambda_j)^2+2(\lambda_i+\lambda_j) +\frac{5}{8},
	\end{eqnarray*}
	where in the last inequality we used the inequality  $2(\lambda_i^2+\lambda_j^2)\geq(\lambda_i+\lambda_j)^2.$
	This implies that
	$$12(\lambda_i+\lambda_j)^2+16(\lambda_i+\lambda_j) +5\leq 0.$$
	Since the roots of the equation $12x^2+16x+5=0$ are $-5/6$ and $-1/2$, then  $-5/6\leq\lambda_i+\lambda_j\leq  -1/2$.
	
	Since the Weyl tensor vanishes in dimension 3, we obtain that the decomposition of the curvature tensor, see \cite{MR2274812}, is given by
	$$R_{ijkl}=R_{il}g_{jk}+R_{jk}g_{il}-R_{ik}g_{jl}-R_{jl}g_{ik}-\frac{1}{12}R_g(g_{il}g_{jk}-g_{ik}g_{jl}).$$
	Therefore, we obtain that the sectional curvature of the plane spanned by $e_i$ and $e_j$ satisfies
	$$K(e_i,e_j)=R_{ijji}=R_{ii}g_{jj}+R_{jj}g_{ii}-\frac{1}{2}R_gg_{ii}g_{jj}=\lambda_i+\lambda_j+\frac{1}{2}\leq0.$$
	
	Thus $g$ has nonpositive sectional curvature. However, the torus does not admit a metric with nonpositive sectional curvature, see Corollary 2 in \cite{MR808219}. %This give us a contradiction. Therefore, such metric does not exist.
\end{proof}

Lin and Yuan \cite {MR3529121} have proved an analogous result for the $Q$-curvature. In dimension 3 if the scalar curvature is constant by (\ref{eq002}) and (\ref{eq030}) we have
\begin{equation}\label{eq031}
Q_g=4\sigma_2(g)-\frac{1}{2}\sigma_1(g)^2=\frac{23}{32}R_g^2-2|Ric_g|^2.
\end{equation}
In this case if the $Q$-curvature is nonnegative, then the $\sigma_2$-curvature is nonnegative as well. But, by (\ref{eq031}) we see that the sign of  the $\sigma_2$-curvature does not determine the sign of the $Q$-curvature. In particular, the Theorem \ref{teo008} is an extension of the Proposition 5.13 in \cite{MR3529121}.

%\begin{acknowledgements}
%If you'd like to thank anyone, place your comments here
%and remove the percent signs.
%\end{acknowledgements}

% BibTeX users please use one of
%\bibliographystyle{spbasic}      % basic style, author-year citations
%\bibliographystyle{spmpsci}      % mathematics and physical sciences
%\bibliographystyle{spphys}       % APS-like style for physics
%\bibliography{}   % name your BibTeX data base

% Non-BibTeX users please use

\end{document}